\documentclass[11pt]{amsart}

\usepackage{epsfig,overpic}
\usepackage{verbatim}
\usepackage[usenames,dvipsnames,svgnames,table]{xcolor}
\usepackage[hyphens]{url}
\usepackage[pagebackref,linktocpage=true,colorlinks=true,linkcolor=Blue,citecolor=BrickRed,urlcolor=RoyalBlue]{hyperref}
\usepackage[msc-links,abbrev]{amsrefs}
\usepackage{amsmath,amsthm,amssymb,marginnote}
\usepackage{enumerate}
\usepackage[T1]{fontenc}
\usepackage[all]{xy}\usepackage[labelformat=empty]{caption}

\textwidth=5.5in
\textheight=8in
\oddsidemargin=0.5in
\evensidemargin=0.5in
\topmargin=.5in

\newtheorem{thm}{Theorem}[section]
\newtheorem{lem}[thm]{Lemma}

\newtheorem{prop}[thm]{Proposition}

\theoremstyle{definition}

\newtheorem{note}[thm]{Note}

\theoremstyle{remark}

\newcommand{\R}{\mathbf{R}}
\newcommand{\ol}[1]{{\overline #1}}

\newcommand{\C}{\mathcal{C}}

\newcommand{\D}{\mathcal{D}}

\renewcommand{\S}{\mathbf{S}}

\renewcommand{\d}{\partial} 
\renewcommand{\tilde}{\widetilde} 
\renewcommand{\H}{\textbf{H}}

\DeclareMathOperator{\inte}{int}

\DeclareMathOperator{\Conf}{Con}
\DeclareMathOperator{\Sym}{Sym}
\DeclareMathOperator{\Fix}{Fix}
\DeclareMathOperator{\Isom}{Iso}

\begin{document}


\title[Point selections from Jordan domains]{Point selections from Jordan domains\\in Riemannian Surfaces}

\author{Igor Belegradek}
\address{School of Mathematics, Georgia Institute of Technology,
Atlanta, GA 30332}
\email{ib@math.gatech.edu}
\urladdr{www.math.gatech.edu/~ib}

\author{Mohammad Ghomi}
\address{School of Mathematics, Georgia Institute of Technology,
Atlanta, GA 30332}
\email{ghomi@math.gatech.edu}
\urladdr{www.math.gatech.edu/~ghomi}

\date{Last revised \today.}
\subjclass[2010]{Primary: 53C40, 54C65; Secondary: 57S25, 30C35.}
\keywords{Continuous selection, Equivariant bundle,   Conformal map, Carath\'{e}odory's kernel theorem, Uniformization, Center of mass, Proper action, Positive reach.}
\thanks{The research of I.B.  was supported by the Simons Foundation grant 524838. The research of M.G. was supported  by NSF Grant DMS--2202337.}

\begin{abstract}
Using fiber bundle theory and conformal mappings, we continuously select a point from the interior of Jordan domains in Riemannian surfaces. This selection can be made equivariant under isometries, and take on prescribed values such as the center of mass when the domains are convex. Analogous results for conformal transformations are obtained as well. It follows that the space of Jordan domains in surfaces of constant curvature admits an isometrically equivariant strong deformation retraction onto the space of round disks. Finally we 
develop a canonical procedure for selecting points from planar Jordan domains.
\end{abstract}

\maketitle



\section{Introduction}
How can one  continuously select a point from the interior of Jordan domains in Euclidean plane $\R^2$? Eugenio Calabi asked this question from the second-named author in 1995. In \cite{belegradek-ghomi2020} we provided an answer for smooth domains. Here we extend that result to the topological category, and develop some of its applications.

Let $M$ be a connected Riemannian $2$-manifold and $B^2$ be the closed unit disk in 
$\R^2$. We say $D\subset M$ is a \emph{Jordan domain} if it forms the image of a continuous injective map $B^2\to M$, called a parametrization of $D$. The space of Jordan domains $\D(M)$ is the collection of all Jordan domains in $M$ with the topology induced by their parametrizations, i.e., a pair of domains are close provided that they admit parametrizations which are $\C^0$-close. A \emph{center} on $A\subset\D(M)$ is a continuous map $c\colon A\to M$ such that $c(D)\in \inte(D)$ for all $D\in A$, where int denotes the interior. Let $\Isom(M)$ be the group  of isometries of $M$, and $G\leq \Isom(M)$ be a closed subgroup.
We say that $c$ is  \emph{G-equivariant}  if for any $\rho\in G$ and $D\in A$, $\rho(D)\in A$, i.e., $A$ is \emph{$G$-invariant}, and $ \rho(c(D))=c(\rho(D))$. We show:

\begin{thm}\label{thm:main}
Any $G$-equivariant center on a closed $G$-invariant subset of
$\D(M)$ may be extended to a $G$-equivariant center  on $\D(M)$.
\end{thm}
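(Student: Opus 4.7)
The plan is to realize centers as sections of a $G$-equivariant fiber bundle over $\D(M)$ and then invoke section-extension techniques for proper group actions. Define the tautological total space $\mathcal{E} := \{(D, p) : D \in \D(M),\, p \in \inte(D)\}$, topologized as a subspace of $\D(M) \times M$, and let $\pi \colon \mathcal{E} \to \D(M)$ denote the projection. A center on $A \subseteq \D(M)$ is precisely a continuous section of $\pi$ over $A$. The $G$-action on $\D(M)$ lifts via $\rho \cdot (D, p) := (\rho(D), \rho(p))$, making $\pi$ a $G$-map; thus $G$-equivariant centers correspond exactly to $G$-equivariant sections, and the theorem reduces to the assertion that every equivariant section of $\pi$ defined over a closed invariant subset extends equivariantly to all of $\D(M)$.

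The first ingredient is that $\pi$ is a locally trivial fiber bundle whose fibers are open $2$-disks. For each $D$, uniformization of Riemann surfaces provides a conformal homeomorphism $\psi_D \colon B^2 \to D$ with respect to the conformal structure that the Riemannian metric on $M$ induces on $\inte(D)$, canonically defined once three boundary points of $B^2$ are sent to three prescribed targets on $\d D$. Carath\'eodory's kernel theorem then upgrades $\C^0$-convergence of parametrized Jordan domains to uniform convergence of the corresponding $\psi_D$, which produces a continuous local trivialization of $\pi$ near every $D$. Properness of the $G$-action on $\D(M)$ follows from properness of $G$ on $M$ together with compactness of Jordan domains; in particular each isotropy group $G_D$ is compact.

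To extend the given equivariant section over $A$ to all of $\D(M)$, I would equip each fiber $\inte(D)$ with the pullback through $\psi_D$ of the hyperbolic metric on $\inte(B^2)$. This yields a complete metric of constant negative curvature, canonically determined up to isometry and $G_D$-invariant because $G_D$ acts conformally on $\inte(D)$. Each fiber is thus a CAT$(0)$ space acted on by isometries of the compact group $G_D$, so its $G_D$-fixed locus is non-empty, convex, and contractible, while CAT$(0)$ barycenters allow canonical equivariant averaging. Covering $\D(M) \setminus A$ by $G$-invariant open sets carrying local equivariant sections (obtained from the local trivialization by selecting $G_D$-fixed points), introducing an equivariant partition of unity on $\D(M)$ (which exists since a proper action on a paracompact base admits one), and averaging the local sections against the given section on $A$ by taking CAT$(0)$ barycenters in each fiber produces the desired global $G$-equivariant center.

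The main obstacle, in my view, is the local-triviality step: verifying, in the topological category on a Riemannian surface of variable curvature, that $\C^0$-convergence of Jordan parametrizations forces kernel convergence of the normalized Riemann maps, and that this convergence is strong enough to yield genuinely continuous trivializing homeomorphisms of $\mathcal{E}$. Once the equivariant bundle structure and the CAT$(0)$ geometry on the fibers are secured, the extension argument becomes a fairly standard application of section-extension principles for equivariant fiber bundles over paracompact bases with proper actions and contractible isotropy-fixed-point fibers.
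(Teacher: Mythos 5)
Your setup coincides with the paper's first move: realize centers as sections of the tautological bundle $\pi\colon\mathcal E\to\D(M)$ and establish local triviality via Carath\'eodory. But note one flaw in the normalization you propose: ``three prescribed targets on $\partial D$'' is not a continuous choice as $D$ varies, because elements of $\D(M)$ are unparametrized images and selecting boundary points continuously is itself a Pixley-type problem at least as hard as the one being solved. The paper instead normalizes the conformal map by an \emph{interior} point $p_0$ and a tangent direction $u_0$ fixed near a reference domain $D_0$; these remain available for all $D$ in a small neighborhood of $D_0$, which is what makes the trivialization go through (Lemma \ref{lem:Riemann}). With that fix your local triviality argument is essentially the paper's Proposition \ref{prop:center}.

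For the equivariant step your route is genuinely different: you propose to endow each fiber $\inte(D)$ with its canonical hyperbolic (hence CAT$(0)$) metric, exploit compactness of isotropy to get fixed points, and glue local equivariant sections with a $G$-invariant partition of unity via CAT$(0)$ barycenters. The paper instead stratifies $\D(M)$ by orbit type $\D^k(M)$ (using Lemma \ref{lem:dim}), passes to the quotient bundle $[\mathcal E^k(M)]\to[\D^k(M)]$, applies Palais's section-extension theorem stratum by stratum, and bridges between strata with Feragen's equivariant Tietze extension (Lemma \ref{lem:last}). Your approach, if completed, would avoid the stratification entirely, which is appealing; but as written it has a real gap precisely at the point the Tietze step handles. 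You write of ``averaging the local sections against the given section on $A$,'' but $c$ is given only on the \emph{closed} set $A$, so it cannot participate in a partition-of-unity average until it has been extended to an \emph{open} $G$-invariant neighborhood of $A$ with values still in the correct fibers --- and that equivariant extension-to-a-neighborhood is exactly the nontrivial content of Lemma \ref{lem:last} (Feragen's theorem plus an equivariant tubular-neighborhood projection into $M$), which you never supply. A secondary gap: ``local equivariant sections obtained by selecting $G_D$-fixed points'' is not a continuous prescription, since $G_D$ is only upper semicontinuous in $D$; making this precise requires Palais's slice theorem (a slice $S$ through $D_0$ with $G_D\le G_{D_0}$ for $D\in S$) together with an actual averaging over the compact group $G_{D_0}$ (Haar measure fed into the CAT$(0)$ barycenter), none of which appears in your sketch. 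Until these two steps are filled in, the proposal does not constitute a proof, though the CAT$(0)$ averaging idea is a plausible alternative to the paper's stratified argument.
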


There are a number of classical point selections,  such as the center of mass or Steiner point, from the interior of convex domains in $\R^2$ \cite{KMT1991} \cite[Chap. 12]{moszynska:book} \cite[Sec. 5.4.1]{schneider2014},  or other Riemannian surfaces \cite{grove-karcher1973,grove1976,karcher1977}. By Theorem \ref{thm:main}, any of these selections may be extended to a $G$-equivariant center on  nonconvex Jordan domains. This result had been established earlier for $\C^1$ Jordan domains \cite{belegradek-ghomi2020} via equivariant topology. Here we use conformal mappings to enhance those  techniques, as described in Sections \ref{sec:general} and \ref{sec:equiv} below. We will also obtain analogues of Theorem \ref{thm:main} for conformal transformations (see Theorem \ref{cor:conformal} and Note \ref{note:conformal}). Some applications will be discussed in Section \ref{sec:application}.  In particular we show that $\D(\R^2)$ is equivariantly contractible with respect to the orthogonal group $O(2)$, and $\D(\S^2)$ admits an isometrically equivariant strong deformation retraction  onto the space of hemispheres (Theorem \ref{cor:contract2}). Finally in Section \ref{sec:canonical} we use the notion of reach in the sense of Federer to develop a canonical procedure for constructing centers for smooth Jordan domains in $\R^2$ (Theorem \ref{thm:C2}).

Our central results here may be framed in terms of continuous selections from multivalued maps, which is a well-known area in functional analysis \cite{repovs-semenov1998, repovs-semenov2002, repovs-semenov2014} originating with Michael~\cite{michael1956a, michael1956b, michael1959}. In those results, however, one usually assumes that the maps take their value from subsets which satisfy some notion of convexity. The closest antecedent for this work and the authors earlier paper \cite{belegradek-ghomi2020}, which applies to nonconvex objects, is due to Pixley \cite{pixley1976} who solved a problem of Bing on point selection from curve segments.  We should also mention the notion of \emph{geodesic center} for polygonal domains in $\R^2$ \cite{asano-toussaint1987, pollack-Sharir-Rote1989},
which is well-known in computational geometry; however, this point does not always lie in the interior, and is well-defined only when the boundary is rectifiable. Selection theorems have important applications in optimal control and differential inclusion \cite{parathasarathy1972}.

\begin{note}
Daniel Asimov  \cite{asimov-email} reports that around 1977 he discussed with Calabi an intrinsic version of the problem we study here, which may be described as follows. Let $B^n$ denote the closed unit ball in Euclidean space $\R^n$, and $\textup{Met}(B^n)$ be the space of smooth Riemannian metrics on $B^n$. 
A diffeomorphism $f\colon B^n\to B^n$ is an isometry with respect to $g\in \textup{Met}(B^n)$, or a \emph{$g$-isometry}, provided that $f^*g=g$, where 
$(f^*g)_p(v,w):=g_{f(p)}(df_p(v),df_p(w))$, for $p\in B^n$ and $v$, $w\in\R^n$. Does there exist a continuous map $c\colon \textup{Met}(B^n)\to B^n$ which is equivariant under isometries, i.e., for every $g\in \textup{Met}(B^n)$ and $g$-isometry $f\colon B^n\to B^n$, $f(c(g))=c(g)$? A necessary condition is that $n\leq 5$ because in higher dimensions there exist groups of diffeomorphisms acting on $B^n$ without a common fixed point \cite{bak-morimoto2005}.
\end{note}

\section{General Centers}\label{sec:general}
We start by establishing Theorem \ref{thm:main} in the case where $G$ is  trivial:

\begin{prop}\label{prop:center}
Any center on a closed subset of $\D(M)$ may be extended to a center on $\D(M)$.
\end{prop}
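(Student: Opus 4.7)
The plan is to reformulate the statement as a section extension problem for a fiber bundle over $\D(M)$ with contractible fibers. Set
\[
\C(M):=\{(D,p)\in\D(M)\times M : p\in\inte(D)\},
\]
and let $\pi\colon\C(M)\to\D(M)$ be projection onto the first coordinate. A center on $A$ is precisely a continuous section of $\pi$ over $A$, so the goal is to extend such a section to all of $\D(M)$.

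The central step is to show that $\pi$ is a locally trivial fiber bundle with fiber homeomorphic to the open disk $\inte(B^2)\cong\R^2$. Given $D_0\in\D(M)$, fix an interior point $p_0\in\inte(D_0)$ and a nonzero tangent vector $v_0\in T_{p_0}M$. Since $\C(M)$ is open in $\D(M)\times M$, there is a neighborhood $U$ of $D_0$ such that $p_0\in\inte(D)$ for every $D\in U$. The Riemannian metric equips $M$ with a complex structure via isothermal coordinates, and the interior of any Jordan domain is simply connected and a proper open subset of $M$. The uniformization theorem then yields, for each $D\in U$, a unique biholomorphism $\phi_D\colon\inte(B^2)\to\inte(D)$ normalized by $\phi_D(0)=p_0$ and $d\phi_D(0)\cdot e_1\in\R_+ v_0$. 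By Carath\'eodory's extension theorem, $\phi_D$ extends to a homeomorphism $B^2\to D$, and by Carath\'eodory's kernel theorem, $D\mapsto\phi_D$ depends continuously on $D\in U$. Consequently $(D,z)\mapsto(D,\phi_D(z))$ is a local trivialization of $\pi$ over $U$.

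With the bundle structure established, I would first extend the given section $c$ to a continuous section on an open neighborhood of $A$ via Tietze extension, using that $M$ is an ANR and that ``$p\in\inte(D)$'' is an open condition on $\D(M)\times M$. Then I would cover $\D(M)$ by this neighborhood together with trivializing opens $U_\alpha$ whose closures miss $A$, giving local sections $D\mapsto\phi_{D,\alpha}(0)$ there. To combine finitely many candidate points in a single fiber canonically, I would equip each $\inte(D)$ with the intrinsic Poincar\'e metric of curvature $-1$ (well-defined since the conformal automorphism group of $\inte(B^2)$ acts by hyperbolic isometries) and take the weighted hyperbolic center of mass with respect to a locally finite partition of unity subordinate to the cover; such a partition exists because $\D(M)$ is metrizable and hence paracompact.

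The main obstacle is the bundle structure in the second step. Continuous dependence of a normalized conformal parametrization on a purely topological Jordan domain is exactly the content of Carath\'eodory's kernel theorem, and it is this feature that should enable the extension of \cite{belegradek-ghomi2020} from smooth to topological boundaries, where elementary tools such as offsets or outward normals are unavailable.
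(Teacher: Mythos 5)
Your fiber-bundle setup and the local-triviality argument via normalized conformal parametrizations (uniformization plus Carath\'eodory's extension and kernel theorems) are exactly the paper's. Where you diverge is in the final extension step: the paper simply invokes Palais's selection theorem \cite[Thm.\ 9]{palais1966} (Lemma~\ref{lem:palais}), which says that a locally trivial bundle over a metrizable base with metrizable contractible fibers admits a global section extending any section given on a closed set; all that remains for the paper is to verify metrizability of $\D(M)$ (Lemma~\ref{lem:metrizable}). You instead reconstruct this by hand: extend $c$ to an open neighborhood of $A$ by Tietze (using openness of $\mathcal{E}(M)$ in $\D(M)\times M$), cover the rest by trivializing opens with closures missing $A$, and glue the resulting local sections by a weighted barycenter with respect to the canonical Poincar\'e metric on each fiber $\inte(D)$, using a subordinate partition of unity. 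This works and is genuinely different. What it buys is a self-contained argument that avoids citing Palais, but it does so by exploiting structure that is special to this situation: a contractible fiber in general carries no averaging operation, and what makes a partition-of-unity proof possible here is that each fiber is a Hadamard space (complete, simply connected, nonpositively curved) in a way that is canonical and varies continuously with $D$ (since the Poincar\'e metric is independent of the conformal chart and pulls back the disk's hyperbolic metric under $\phi_D$). In effect you are proving by hand the special case of Palais's theorem in which the fibers admit a continuously varying family of convex metrics; the paper's route is shorter because Palais handles the general contractible-fiber case once and for all. Both are correct.
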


 The main idea here is to frame the problem in terms of the existence of a section of a fiber bundle. The principal fact we need  is:

\begin{lem}[Palais \cite{palais1966}, Thm. 9]\label{lem:palais}
If a locally trivial fiber bundle has a metrizable base and metrizable contractible fibers, then it admits a global section. Furthermore, any partial section defined on a closed subset of the base may be extended to a global section. 
\end{lem}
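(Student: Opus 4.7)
The plan is to frame $\D(M)$ as the base of a natural fiber bundle and apply Palais's theorem (Lemma~\ref{lem:palais}).  Let
$$E \;:=\; \{\, (D,p) \in \D(M) \times M \,:\, p \in \inte(D) \,\},$$
and let $\pi \colon E \to \D(M)$ denote the projection to the first factor.  A center on $A \subset \D(M)$ is precisely a continuous section of $\pi$ over $A$, so the proposition reduces to showing that any partial section of $\pi$ over a closed subset of the base extends to a global section.  By Lemma~\ref{lem:palais}, it therefore suffices to verify that $\pi$ is a locally trivial fiber bundle whose base is metrizable and whose fibers are metrizable and contractible.

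Metrizability of $\D(M)$ is immediate from the definition of its topology via $C^0$-convergence of parametrizations $B^2 \to M$.  Each fiber $\inte(D)$ is homeomorphic to an open $2$-disk, hence contractible, and metrizable as a subspace of $M$.

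The principal task is local triviality of $\pi$, and here conformal mappings enter as the essential tool.  Fix $D_0 \in \D(M)$, a point $p_0 \in \inte(D_0)$, and a nonzero vector $v_0 \in T_{p_0}M$.  Since $\inte(D)$ is a simply connected Riemann surface and $\partial D$ is a Jordan curve in $M$, the uniformization theorem combined with Carath\'eodory's boundary-extension theorem yields a homeomorphism $\phi_D \colon \bar B^2 \to \bar D$ that is conformal on the interior.  Requiring $\phi_D(0) = p_0$ together with a prescription on the derivative $(d\phi_D)_0$ (for instance, that it sends a fixed reference vector in $\R^2$ to a positive multiple of $v_0$) pins $\phi_D$ down uniquely, and this normalization is available for every $D$ in a sufficiently small neighborhood $\mathcal{U} \subset \D(M)$ of $D_0$.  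The candidate local trivialization is then
$$\Phi \colon \mathcal{U} \times \inte(D_0) \longrightarrow \pi^{-1}(\mathcal{U}), \qquad (D,q) \longmapsto \bigl(D,\; \phi_D(\phi_{D_0}^{-1}(q))\bigr).$$

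The delicate step, which I expect to be the main obstacle, is showing that the normalized map $\phi_D$ depends continuously on $D$, so that $\Phi$ is a homeomorphism over $\mathcal{U}$.  In the planar setting this is essentially Carath\'eodory's classical kernel theorem, since $C^0$-convergence of Jordan parametrizations $B^2 \to \R^2$ implies kernel convergence of the corresponding domains.  To handle a general Riemannian surface $M$, the plan is to exploit isothermal coordinates: cover a neighborhood of $\bar D_0$ by conformal charts, realize the Jordan domains close to $D_0$ as planar Jordan domains in these charts, apply the planar kernel theorem, and piece the local statements together to obtain continuity of $\phi_D$ in $D$.  This transfer from the Riemannian to the planar setting is the main technical hurdle of the argument.
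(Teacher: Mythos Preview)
Your proposal is not a proof of the stated Lemma~\ref{lem:palais} at all. That lemma is Palais's section-extension theorem, quoted from \cite{palais1966} as a black box; the paper offers no proof of it, and neither do you. What you have written is instead a proof of Proposition~\ref{prop:center} (the case $G$ trivial of Theorem~\ref{thm:main}), which \emph{applies} Lemma~\ref{lem:palais} rather than establishing it.

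Read as an argument for Proposition~\ref{prop:center}, your proposal follows the paper's approach essentially verbatim: build the bundle $\mathcal{E}(M)\to\D(M)$ whose sections are centers, check metrizability of the base and contractibility of the fibers, and use Carath\'{e}odory's theorem (existence, boundary extension, and the kernel theorem for continuous dependence) to produce the local trivialization. Your trivialization $(D,q)\mapsto(D,\phi_D(\phi_{D_0}^{-1}(q)))$ differs from the paper's $(D,p)\mapsto(D,f_D^{-1}(p))$ only in the choice of model fiber ($\inte(D_0)$ versus $\inte(B^2)$). The one point where you go slightly beyond the paper is your remark about passing to isothermal charts for a general Riemannian $M$; the paper states its Carath\'{e}odory lemma only for model planes and is silent on this reduction in Section~\ref{sec:general}, so your caution there is well placed, though not a divergence in strategy.
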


For example, the above lemma applies to any locally trivial disk bundle over a metric space. To utilize Lemma \ref{lem:palais}, we  first establish the following fact. Let $\textup{Emb}^0(B^2,M)$ denote the space of injective continuous maps $B^2\to M$, with its standard $\C^0$-topology as a subspace of continuous maps $\C^0(B^2,M)$.
\begin{lem}\label{lem:metrizable}
$\D(M)$ is metrizable. 
\end{lem}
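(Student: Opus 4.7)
The plan is to present $\D(M)$ as the orbit space of an isometric group action on a metric space, and then verify that the resulting orbit metric induces the quotient topology. Equip $\textup{Emb}^0(B^2,M)$ with the sup metric $\|f-g\|_\infty:=\sup_{x\in B^2}d_M(f(x),g(x))$ inherited from $\C^0(B^2,M)$, where $d_M$ is the Riemannian distance on $M$. Let $q\colon \textup{Emb}^0(B^2,M)\to \D(M)$ be the surjection sending a parametrization to its image. By invariance of domain, two continuous injections $B^2\to M$ have the same image precisely when they differ by a homeomorphism of $B^2$, so the fibers of $q$ are exactly the orbits of the precomposition action of $\mathrm{Homeo}(B^2)$ on $\textup{Emb}^0(B^2,M)$; by the definition recalled in the introduction, $\D(M)$ carries the associated quotient topology.

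Since any $\phi\in \mathrm{Homeo}(B^2)$ is a bijection of $B^2$, precomposition by $\phi$ preserves the sup norm, so $\mathrm{Homeo}(B^2)$ acts by isometries. I would then introduce the orbit pseudo-metric
\[
d(D_1,D_2):=\inf\{\|f_1-f_2\|_\infty:f_i\text{ parametrizes }D_i\}
\]
and verify the triangle inequality via the standard device for isometric group actions: after fixing a single parametrization of the middle domain, isometricity lets one absorb the $\mathrm{Homeo}(B^2)$ action into the choices of parametrizations on the two outer domains, converting a double infimum into a single one.

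To show that $d$ separates points, suppose $d(D_1,D_2)=0$. Using isometricity, fix a parametrization $f$ of $D_2$ and find parametrizations $f_n$ of $D_1$ with $f_n\to f$ uniformly. The inclusion $D_2\subset D_1$ is immediate since each point of $D_2$ is a limit of points of the compact set $D_1$; for the reverse inclusion, writing any $x\in D_1$ as $f_n(y_n)$ and extracting a convergent subsequence $y_{n_k}\to y$ by compactness of $B^2$ gives $x=f(y)\in D_2$. Finally, because the isometric action makes $q$ an open map, a direct comparison shows that the basic $d$-neighborhoods of $D$ and the basic quotient neighborhoods of $D$ both coincide with the $q$-images of sup balls around any fixed parametrization of $D$, so the metric topology agrees with the quotient topology. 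I expect the main obstacle to be not a single deep step but the bookkeeping needed to exploit isometricity correctly at each stage; compactness of $B^2$ and invariance of domain are the only nontrivial external inputs.
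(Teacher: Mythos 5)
Your proof is correct and is an elaboration of the paper's own proof: the paper defines the same sup-infimum metric $d$ in \eqref{eq:metric} and simply asserts it is ``not difficult to check'' that $d$ is a metric, while you actually carry out that check, organizing it around the observation that $\mathrm{Homeo}(B^2)$ acts by isometries on $\textup{Emb}^0(B^2,M)$ so that $d$ is the induced orbit pseudo-metric, and you further verify that the resulting metric topology coincides with the quotient topology, a compatibility the paper leaves implicit. One small remark: invariance of domain is not needed for the assertion that two parametrizations of the same Jordan domain differ by a homeomorphism of $B^2$; since $B^2$ is compact and $M$ is Hausdorff, each injective continuous map is automatically a homeomorphism onto its image, so $f_2^{-1}\circ f_1$ already serves as the reparametrizing homeomorphism.
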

\begin{proof}
Let $\delta\colon M\times M\to\R$ be the metric of $M$.
Given $D_1$, $D_2\in \D(M)$, set 
\begin{equation}\label{eq:metric}
d(D_1,D_2):= \inf\Big\{\sup_{p\in B^2}\delta\big(f_1(p), f_2(p)\big)\;\big|\; f_i\in \textup{Emb}^0(B^2,M), f_i(B^2)=D_i\Big\}.
\end{equation}
It is not difficult to check that $d$ is a metric on $\D(M)$. 
\end{proof}
Next we associate a fiber bundle to $\D(M)$ whose total space is generated by interior points of the domains:
$$
\mathcal{E}(M):=\big\{(D,p)\mid D\in \D(M), \;p\in\inte(D)\big\}.
$$
Let $\pi\colon \mathcal{E}(M)\to \D(M)$ be the projection given by
$
\pi(D,p):=D.
$
 Then each fiber $\pi^{-1}(D)$ is homeomorphic to $\inte(B^2)$, which is contractible. To show that 
$\pi$ is locally trivial, we need the following fact.  A \emph{model plane} is either the Euclidean plane $\R^2$, the unit sphere $\S^2$, or the hyperbolic plane $\textbf{H}^2$. Let $o$ denote the origin of $\R^2$.

\begin{lem}[Carath\'{e}odory]\label{lem:Riemann}
Let $M$ be a  model plane. For any domain $D\in \D(M)$, point $p\in\inte(D)$, and vector $u\in T_pM\setminus\{0\}$, there exists a unique continuous map 
$$
f=f_{D,p,u}\in \textup{Emb}^0(B^2,M)
$$
 with $f(B^2)=D$, such that $f$ is conformal on $\inte(B^2)$, $f(o)=p$, and $df_o((1,0))$ is parallel to $u$. Furthermore, if  $D_i\in \D(M)$ is a sequence of domains converging  to $D$,  then $f_{D_i,p,u}$ converges to $f$.
\end{lem}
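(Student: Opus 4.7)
The plan is to reduce to the Euclidean case via conformal uniformization, apply the Riemann mapping theorem and the Osgood--Carath\'{e}odory boundary extension to produce and normalize the conformal homeomorphism, and deduce the continuity in $D$ from Carath\'{e}odory's kernel convergence theorem together with its boundary-behavior refinement for Jordan domains.

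First I would dispose of the non-Euclidean cases. For $M=\H^2$, the Poincar\'{e} disk model identifies $\H^2$ conformally with $\inte(B^2)\subset\C$. For $M=\S^2$, compactness of $D$ prevents $D=\S^2$, so there exists $q\in\S^2\setminus D$, and stereographic projection from $q$ conformally sends $\S^2\setminus\{q\}$ onto $\C$. In both cases the problem reduces to producing $f$ when $D$ is a bounded Jordan domain in $\C$, with the tangent vector $u$ transported along the chosen chart.

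For existence and uniqueness in $\C$, the Riemann mapping theorem yields a conformal isomorphism $g\colon\inte(B^2)\to\inte(D)$, and the Osgood--Carath\'{e}odory theorem extends $g$ to a homeomorphism $g\colon B^2\to D$, since $\partial D$ is a Jordan curve. The conformal automorphism group of $\inte(B^2)$ acts simply transitively on pairs (interior point, unit tangent direction); hence there is a unique automorphism $\phi$ of $B^2$ with $g(\phi(o))=p$ and $d(g\circ\phi)_o((1,0))$ a positive multiple of $u$, and $f:=g\circ\phi$ is the required map. Uniqueness follows because any two solutions differ by a conformal automorphism of $\inte(B^2)$ fixing $o$ and preserving the direction $(1,0)$ at $o$, which is forced to be the identity by the Schwarz lemma.

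For continuity, assume $D_i\to D$ in $\D(M)$. Since $q\notin D$ there is a neighborhood of $q$ disjoint from $D$, hence from all $D_i$ for large $i$, so we may work in the single stereographic (or disk-model) chart around $q$, identifying the $D_i$ and $D$ as Jordan domains in $\C$. The $\C^0$-convergence of parametrizations forces Hausdorff convergence $\partial D_i\to\partial D$, so in particular $p\in\inte(D_i)$ eventually and $D$ is the Carath\'{e}odory kernel of $\{D_i\}$ with base point $p$. Carath\'{e}odory's kernel theorem then yields $f_{D_i,p,u}\to f_{D,p,u}$ uniformly on compact subsets of $\inte(B^2)$. The main obstacle I anticipate is upgrading this to uniform convergence on the closed disk $B^2$, which is precisely what the topology of $\D(M)$ demands. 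This requires the boundary-behavior refinement of the kernel theorem for Jordan domains: one shows equicontinuity of the family $\{f_{D_i,p,u}\}$ up to $\partial B^2$ by bounding the diameters of crosscut images via extremal-length (modulus) estimates, using that the limit Jordan curve $\partial D$ has no pinches and that Hausdorff convergence of the approximating Jordan curves $\partial D_i$ keeps the relevant annular moduli uniformly bounded below. With this equicontinuity in hand, an Arzel\`{a}--Ascoli argument combined with interior convergence and uniqueness of the normalization promotes the convergence to the whole of $B^2$.
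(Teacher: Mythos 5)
Your proposal follows essentially the same route as the paper: Riemann mapping plus the Osgood--Carath\'{e}odory extension for existence and uniqueness, and Carath\'{e}odory's kernel theorem together with a boundary-behavior refinement for the convergence, which the paper handles by citing Pommerenke's Theorem~2.11 --- precisely the uniform-on-$B^2$ upgrade you sketch via modulus/equicontinuity estimates. One small point worth making explicit: the topology on $\D(M)$ gives Fr\'{e}chet (parametric) convergence of the boundary Jordan curves, not merely Hausdorff convergence, and it is this stronger hypothesis that Pommerenke's theorem (and your extremal-length argument) actually requires.
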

\begin{proof}
The first statement is Carath\'{e}odory's refinement of Riemann mapping theorem \cite[Thm 5.1.1]{krantz:gft}, which ensures that conformal maps from the interior of $B^2$ to the interior of a Jordan domain $D\subset M$ extend to a homeomorphism $B^2\to D$. The second statement is a consequence of Carath\'{e}odory's kernel theorem, and follows quickly from \cite[Thm. 2.11]{Pommerenke:book}.
\end{proof}

We now show that
$\pi$ is locally trivial, which will conclude the proof of Proposition \ref{prop:center}.
Pick $D_0\in\D(M)$, and fix $p_0\in \inte(D_0)$, $u_0\in T_{p_0} M\setminus\{0\}$. Let $U$ be a neighborhood of $D_0$ in $\D(M)$ so small that $p_0$ belongs to the interior of every domain $D\in U$. There exists then, for every $D\in U$, a canonical homeomorphism $f_{D}:=f_{D,p_0,u_0}\colon B^2\to D$ given by Lemma \ref{lem:Riemann}. Now the map
$$
\pi^{-1}(U)\;\;\ni\;(D,p)\overset{\phi}{\longmapsto}\big(D,f_D^{-1}(p)\big)\;\;\in\;\; U\times \inte(B^2)
$$
yields the desired trivialization. In particular note that  $D\mapsto f_D$ is continuous by Lemma \ref{lem:Riemann}, which ensures that $D\mapsto f_D^{-1}$ and consequently $\phi$  are continuous.

\section{Equivariant Centers}\label{sec:equiv}
Here we modulate the fiber bundle approach in the last section by  the action of $G\le\Isom(M)$  to  
complete the proof of Theorem \ref{thm:main}.
Similar to \cite{belegradek-ghomi2020} this involves stratifying $\D(M)$ by its orbit types under $G$, and extending the center inductively to each stratum; however, here we can implement this procedure more directly via conformal maps. A topological group $G$ \emph{acts properly} on $M$ provided that for any compact set $K\subset M$, the collection of $g\in G$ such that $g(K)\cap K\neq\emptyset$ is compact \cite[Prop. 9.12]{lee2003}. It is well-known that $\Isom(M)$ acts properly on $M$ \cite[Thm. I.4.7]{kobayashi-nomizu1996a}, but this is not  the case for the group $\Conf(M)$ of conformal transformations:

\begin{lem}\label{lem:proper}
If $G\leq\Conf(\R^2)$ acts properly on $\R^2$, then $G\leq\Isom(\R^2)$.
\end{lem}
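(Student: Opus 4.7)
The plan is to prove the contrapositive by contradiction: assume some $g\in G$ fails to be an isometry and construct a compact $K\subset\R^2$ so that $\{h\in G:h(K)\cap K\neq\emptyset\}$ contains a non-relatively-compact set of iterates of $g$, violating properness.

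First I would recall the structure of $\Conf(\R^2)$: every element is a similarity of the form $z\mapsto az+b$ or $z\mapsto a\bar z+b$ with $a\in\mathbb{C}^\ast$, $b\in\mathbb{C}$, and it lies in $\Isom(\R^2)$ precisely when $|a|=1$. So if $G\not\le\Isom(\R^2)$ we can pick $g\in G$ with multiplier of modulus $\neq 1$; replacing $g$ by $g^{-1}$ I may assume $|a|>1$, and by replacing $g$ with $g^2$ (whose multiplier is $|a|^2>1$) I may assume $g$ is orientation-preserving. A direct calculation shows that $g(z)=z$ has a unique solution $p=b/(1-a)\in\R^2$, since $a\neq 1$.

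Next, take the compact set $K=\{p\}$. For every $n\in\mathbb{Z}$, $g^n$ fixes $p$, so $g^n(K)\cap K=\{p\}\neq\emptyset$. By properness, the set $H:=\{h\in G:h(K)\cap K\neq\emptyset\}$ is compact; in particular the sequence $\{g^n\}_{n\in\mathbb{Z}}\subset H$ admits a subsequence $g^{n_k}$ converging in $G$, and since $G$ carries the subspace topology from the compact-open topology on $\Conf(\R^2)$, $g^{n_k}$ converges uniformly on compacta to some similarity $h\in\Conf(\R^2)$.

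Finally I would reach a contradiction by evaluating at any point $q\neq p$. In coordinates centered at $p$ the map $g^n$ is multiplication by $a^n$, so $|g^n(q)-p|=|a|^n|q-p|$; this tends to $\infty$ along every subsequence with $n_k\to+\infty$, contradicting $g^{n_k}(q)\to h(q)\in\R^2$, and tends to $0$ along every subsequence with $n_k\to-\infty$, forcing the limit $h$ to be the constant map $q\mapsto p$, which is not a conformal transformation. Since $\{g^n\}_{n\in\mathbb{Z}}$ is infinite, at least one of the two cases must occur, producing the desired contradiction.

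The only subtle point I anticipate is the reduction to the orientation-preserving case: one has to verify that passing to $g^2$ does not trivialize the argument, but since $|a|^2\neq 1$ whenever $|a|\neq 1$, this step is harmless. Everything else is routine once the normal form for $\Conf(\R^2)$ is in hand.
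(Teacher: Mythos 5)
Your proof is correct and uses essentially the same strategy as the paper: locate the fixed point of a non-isometric similarity $g$, take the compact set $K$ to be (a neighborhood of) that fixed point so that $g^n(K)\cap K\neq\emptyset$ for every $n\in\mathbb{Z}$, and show that $\{g^n\}$ is not relatively compact, contradicting properness. The paper works in real matrix notation with $g(x)=r\rho(x)+b$, $\rho\in O(2)$, and sidesteps your reduction to the orientation-preserving case by observing directly that $I-r\rho$ is invertible whenever $r\neq 1$; it also uses $K=x_0+B^2$ rather than a singleton, but these differences are cosmetic.
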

\begin{proof}
Let $g\in G$. Then $g(x)=r\rho(x)+b$, where $r>0$, $\rho\in O(2)$, and $b\in\R^2$. Note that $g\in\Isom(\R^2)$ if and only if $r=1$. If $r\neq 1$, then $I-r\rho$ is invertible, where $I$ is the identity transformation. Consequently
$g$ fixes $x_0:=(I-r\rho)^{-1}(b)$, which yields $g(x)=x_0+r\rho(x-x_0)$.  So $g^n(x)=x_0+r^n\rho^n(x-x_0)$, and $|g^n(x)-x_0|=r^n$ for integers $n$. Thus  the set $\{g^n(x_1)\}$ is unbounded for any $x_1\in\R^2\setminus\{x_0\}$. Hence $\{g^n\}$ is not a compact subset of $G$, since $\{g^n(x_1)\}$ is the image of $\{g^n\}$ under the continuous mapping $\{g^n\}\to\R^2$, given by $x_1\mapsto g^n(x_1)$. On the other hand,
if $K:=x_0+B^2$, then $g^n(K)=x_0+r^n B^2$. So
$g^n(K)\cap K=(r^n B^2)\cap B^2\neq\emptyset$, which is not possible if $G$ acts properly. Hence $r=1$, which means $g\in\Isom(\R^2)$. 
\end{proof}

We may assume that $M$ is simply connected after replacing it with its universal Riemannian cover,
and replacing $G$ with the group of all lifts of elements of $G$ to the covering space, because any equivariant center on the universal cover descends to an equivariant center on $M$.
Then $M$ (which may not be complete) is conformal to a complete Riemannian manifold $\ol M$ \cite[Thm. 1]{nomizu1961}
which we may assume to be a model plane by the uniformization theorem. Now
$G\leq\Conf(\ol M)$. We claim that  $G\leq\Isom(\ol M)$. This is immediate when 
$\ol M=\textbf{H}^2$,  since $\Conf(\textbf{H}^2)=\Isom(\H^2)$. If $\ol M=\S^2$,
then $\Isom(M)$ is compact. So $G$ is a compact subgroup of $\Conf(\S^2)$. Hence $G$ is conjugate 
to a subgroup of $\Isom(\S^2)$ (which means that it  is an isometry group of 
an isometric copy of $\S^2$). If $\ol M=\R^2$, note that since $G\leq \Isom(M)$, it  acts properly on $M$, and therefore on $\ol M$. Hence $G\leq\Isom(\ol M)$ by Lemma \ref{lem:proper}.
In short, after replacing $M$ with $\ol M$, we may assume that $M$ is a model plane. To 
prove Theorem \ref{thm:main} it remains to show:

\begin{prop}\label{prop:model}
Let $M$ be a model plane. Then any $G$-equivariant center on a closed $G$-invariant subset of $M$ may be extended to a $G$-invariant center on $M$.
\end{prop}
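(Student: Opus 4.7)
My plan is to prove Proposition \ref{prop:model} by realizing a $G$-equivariant center as a $G$-equivariant section of the fiber bundle $\pi\colon\mathcal{E}(M)\to\D(M)$ of Proposition \ref{prop:center}, where $G$ now acts diagonally via $\rho\cdot(D,p):=(\rho(D),\rho(p))$. The strategy is to stratify $\D(M)$ by $G$-orbit type and extend the center inductively over the strata. The needed contractibility of fibers, which in \cite{belegradek-ghomi2020} had to be obtained by intricate equivariant arguments, will here be furnished more directly by conformal maps.

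Here is the key geometric input. Since $G\leq\Isom(M)$ acts properly on $M$ and each $D\in\D(M)$ is compact, every stabilizer $H:=\mathrm{Stab}_G(D)$ is a compact subgroup of $G$. For any conformal parametrization $f\colon B^2\to D$ from Lemma \ref{lem:Riemann}, conjugation converts the isometric $H$-action on $\inte(D)$ into a conformal self-action on $\inte(B^2)$; equipping $\inte(B^2)$ with its Poincar\'e metric, this becomes an isometric $H$-action on the hyperbolic plane $\H^2$. Cartan's fixed point theorem then supplies a nonempty totally geodesic (hence convex and contractible) fixed set in $\H^2$, so transporting back via $f$ the set
\[
\inte(D)^H:=\{p\in\inte(D):h(p)=p\text{ for all }h\in H\}
\]
is nonempty and contractible.

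For each compact subgroup $H\leq G$, let $\D(M)^H:=\{D\in\D(M):h(D)=D\text{ for all }h\in H\}$, a closed, $N_G(H)$-invariant subspace of $\D(M)$. Any $G$-equivariant center restricted to $\D(M)^H$ must take values in the $H$-fixed locus $M^H$, so it corresponds to a section of the subbundle $\pi^H\colon\mathcal{E}(M)^H\to\D(M)^H$ whose fiber over $D$ is $\inte(D)^H$. The continuous dependence of $f_D$ on $D$ in Lemma \ref{lem:Riemann} ensures that the trivialization $\phi$ constructed in Proposition \ref{prop:center} intertwines the $H$-action on fibers, so $\pi^H$ is locally trivial with metrizable contractible fibers, and Lemma \ref{lem:palais} applies to it.

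I then extend the given partial center by transfinite induction on conjugacy classes of compact subgroups of $G$, ordered so that larger stabilizers come first. At each stage the previously constructed portion of the center yields a partial section of $\pi^H$ on a closed subset of $\D(M)^H$; I extend this via Lemma \ref{lem:palais} and propagate the result across $G$-orbits by equivariance. I expect the principal obstacle to be reconciling the required $N_G(H)/H$-equivariance with the non-equivariant Palais Lemma: I plan to handle this by working on a fundamental domain for the $N_G(H)/H$-action on the complement within $\D(M)^H$ of the already-treated strata (where the residual action is free because $H$ is the full stabilizer there), so that Palais can be applied to the quotient bundle and then reassembled to a continuous, equivariant section by translating via $N_G(H)/H$.
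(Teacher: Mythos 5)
Your overall architecture — stratify by symmetry type, use conformal parametrizations to endow the stabilizer with a hyperbolic-isometric action, apply Palais' selection lemma stratum by stratum — is the same as the paper's. The Cartan fixed-point argument you give is a clean way to see that $\inte(D)^H$ is nonempty and contractible, and runs parallel to the paper's Lemma~\ref{lem:dim}. But there are two substantive gaps.

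First, and most seriously, you never address how the sections produced on successive strata fit together to give a map that is continuous on all of $\D(M)$. Your plan builds a continuous $G$-equivariant section over each $\D(M)^H$ separately; but a stratum $\D^k(M)$ is not closed in $\D(M)$ when $k>0$ (indeed $\D^2(M)$ is open and dense), so a sequence $D_i$ with trivial stabilizer can converge to a highly symmetric $D$, and nothing in your inductive scheme forces $c(D_i)\to c(D)$. The paper resolves exactly this with Lemma~\ref{lem:last}: after producing a center on $\overline{\D^{k}}(M)$, it invokes Feragen's equivariant Tietze theorem to push the center out to a closed $G$-invariant set containing an \emph{open neighborhood} of $\overline{\D^{k}}(M)$, and only then applies the Palais-type extension lemma on the next stratum; the open neighborhood is what guarantees continuity at stratum boundaries. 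Without this step (or some substitute), the section you assemble is only piecewise continuous.

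Second, the transfinite induction over conjugacy classes of compact subgroups, combined with a ``fundamental domain'' for the residual $N_G(H)/H$-action, introduces complications the paper's proof is engineered to avoid. There are infinitely many conjugacy classes of finite subgroups (one for each dihedral and cyclic order), and their closure relations are not totally ordered, so the bookkeeping for well-foundedness and for continuity on the union of infinitely many strata is nontrivial. The paper observes (Lemma~\ref{lem:dim}) that for a model plane only three cases occur, indexed by $\dim\Fix(D)\in\{0,1,2\}$, and stratifies by that integer alone; Lemma~\ref{lem:Ds} then shows these three strata nest correctly under closure, giving a finite induction. Moreover, rather than reassembling from a fundamental domain (which may not exist as a closed set and whose boundary identifications are delicate), the paper passes to the genuine orbit-space bundle $[\mathcal{E}^k(M)]\to[\D^k(M)]$, verifies local triviality via Palais' slice theorem (Lemma~\ref{lem:trivial}), checks metrizability of the quotients (Lemma~\ref{lem:DandE}), and applies Palais' section lemma directly there. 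You would need to either reproduce these verifications or explain why your ordering and fundamental-domain gluing circumvent them; as written, the proposal asserts local triviality and metrizability for $\pi^H$ without justification.
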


For the rest of this section we assume that $M$ is a model plane, and $G\leq\Isom(M)$ is a closed subgroup. For any $D\in\D(M)$, let $\Sym(D)\leq G$ be the maximal subgroup  which maps $D$ onto itself,   
and  $\Fix(D)$ be the set of points in $D$ which are fixed by all
$\rho\in\Sym(D)$. We say $\rho\in\Sym(D)$ is a \emph{rotation} if it preserves the orientation, and is a \emph{reflection} if it reverses the orientation of $M$. Let us record that:

\begin{lem}\label{lem:dim}
For any $D\in \D(M)$, $\Sym(D)$ is either (i) the trivial group, (ii) a group of order $2$ generated by a reflection, (iii)  a group which contains a cyclic subgroup of rotations. In these cases $\Fix(D)$ is equal respectively to (i) $D$, (ii) a simple geodesic arc with relative interior in $\inte(D)$ and end points on $\d D$, or (iii) a single point in the interior of $D$.
\end{lem}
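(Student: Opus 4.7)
The plan is to use the Riemann--Carath\'eodory map from Lemma~\ref{lem:Riemann} to transfer the conformal action of $\Sym(D)$ on $\inte(D)$ to an action by hyperbolic isometries on the Poincar\'e disk $\inte(B^2)$, where closed subgroups are classified by subgroups of $O(2)$. First I would check compactness of $\Sym(D)$: since $\Isom(M)$ acts properly on $M$ and $\overline D$ is compact, the set $\{g\in\Isom(M): g(\overline D)=\overline D\}$ is compact, and $\Sym(D)$ is closed in it. Fix $p_0\in\inte(D)$ and $u_0\in T_{p_0}M\setminus\{0\}$, and let $f:=f_{D,p_0,u_0}\colon B^2\to D$ be the canonical homeomorphism of Lemma~\ref{lem:Riemann}. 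For each $\rho\in\Sym(D)$, the conjugate $\tilde\rho:=f^{-1}\circ\rho\circ f$ is a homeomorphism of $B^2$ that is conformal or anti-conformal on $\inte(B^2)$ according as $\rho$ preserves orientation (isometries of a model plane are conformal up to orientation), hence an isometry of the hyperbolic metric on $\inte(B^2)$. Since an isometry of $M$ acting as the identity on an open set is the identity, $\rho\mapsto\tilde\rho$ is an injective homomorphism presenting $\Sym(D)$ as a compact subgroup of $\Isom(\H^2)$.

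By Cartan's fixed point theorem applied to the CAT$(0)$ space $\H^2$, this compact group has a common fixed point $q\in\inte(B^2)$ and is conjugate to a closed subgroup of the point-stabilizer $O(2)$. Set $p:=f(q)\in\inte(D)$, so that $p\in\Fix(D)$. The closed subgroups of $O(2)$ are $\{1\}$, the order-$2$ subgroup $D_1$ generated by a reflection, the cyclic rotation groups $C_n$ with $n\geq 2$, the dihedral groups $D_n$ with $n\geq 2$, $SO(2)$, and $O(2)$: case (i) of the lemma corresponds to $\{1\}$, case (ii) to $D_1$, and case (iii) to the remaining possibilities, each of which contains a nontrivial cyclic rotation subgroup.

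Finally I would identify $\Fix(D)$ in each case. For (i) it is $D$ by definition; for (iii) any nontrivial rotation $\rho\in\Sym(D)$ has fixed set $\{p\}$ in $M$, forcing $\Fix(D)=\{p\}$. For (ii) the generator $r$ is an order-$2$ orientation-reversing element of $\Isom(M)$ preserving the compact set $\overline D$; a short case analysis in each model plane rules out glide reflections (infinite order) and, in $\S^2$, the antipodal map (fixed-point free, so cannot preserve a disk by Brouwer), leaving $r$ as a geodesic reflection across some geodesic $\ell\subset M$, with $\Fix(D)=\ell\cap\overline D$. Transferring $r$ via $f$ produces an anti-conformal involution $\tilde r$ of $\inte(B^2)$ fixing $q$, which by a M\"obius change of coordinates is conjugate to $z\mapsto\bar z$; its fixed set in $\overline{B^2}$ is a diameter, so $\ell\cap\overline D=f(\Fix(\tilde r))$ is a simple geodesic arc with endpoints on $\partial D$ and relative interior in $\inte D$. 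The main technical content is the transfer to $\Isom(\H^2)$ and the subsequent application of Cartan's fixed point theorem; a secondary subtlety is the model-plane case analysis eliminating non-reflection order-$2$ orientation-reversing symmetries.
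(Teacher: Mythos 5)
The paper does not actually supply a proof of this lemma: it is stated as a recorded observation ("Let us record that:") with no proof given, so there is no paper argument to compare against. I can therefore only assess the proposal on its own merits.

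Your overall strategy — transfer $\Sym(D)$ to a compact group of hyperbolic isometries of $\inte(B^2)$ via the Carath\'eodory map, invoke Cartan's fixed point theorem, reduce to the classification of closed subgroups of $O(2)$, and then pull fixed sets back through $f$ — is sound and efficient. The compactness of $\Sym(D)$, the conjugation to $O(2)$, the trichotomy of subgroups, and the case (ii) analysis (ruling out glide reflections and the antipodal map, then transferring the anti-conformal involution to a conjugate of $z\mapsto\bar z$ whose fixed set in $B^2$ is a closed diameter-like arc) are all correct.

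There is one genuine error in case (iii). You write that ``any nontrivial rotation $\rho\in\Sym(D)$ has fixed set $\{p\}$ in $M$.'' That is true in $\R^2$ and $\mathbf{H}^2$, but false in $\S^2$: a nontrivial rotation of $\S^2$ fixes the two antipodal points $p,-p$, so \emph{a priori} $\Fix(D)\subseteq\{p,-p\}\cap D$ and you must still rule out $-p\in D$. The quickest repair is to exploit the transfer you already set up: the conjugate $\tilde\rho$ is a nontrivial elliptic isometry of $\inte(B^2)$ extending to a homeomorphism of $B^2$, conjugate to $z\mapsto e^{i\alpha}z$ with $\alpha\not\equiv 0$, which has $q$ as its \emph{only} fixed point in $B^2$ (none on $\partial B^2$). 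Hence $\Fix(\rho)\cap D=f(\{q\})=\{p\}$, so $-p\notin D$ and $\Fix(D)=\{p\}$. Alternatively, apply Brouwer to the complementary disk $\S^2\setminus\inte(D)$: $\rho$ preserves it, so $\rho$ must fix a point there, and since $\rho$ has only the two fixed points $p,-p$ and $p\in\inte(D)$, one concludes $-p\notin D$ (one also needs the small observation, which you omit, that a nontrivial rotation cannot fix a point of $\partial D$, since near such a point it would swap the two sides of the Jordan curve, contradicting $\rho(D)=D$). With either repair the argument is complete.
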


So $\D(M)$ may be partitioned into subsets
\begin{gather*}
\D^k(M):=\big\{D\in \D(M)\mid\dim\big(\Fix(D)\big)=k\big\},
\end{gather*}
for $k=0$, $1$, $2$.
Let  $\overline{\D^k}(M)$ denote the closure of $\D^k(M)$ in $\D(M)$. 

\begin{lem}\label{lem:Ds}
For all $0\leq k\leq 2$, $\overline{\D^k}(M)\subset\bigcup_{\ell=0}^k\D^\ell(M)$. Furthermore, if $k\neq 1$, then $\overline{\D^k}(M)=\bigcup_{\ell=0}^k\D^\ell(M)$.
\end{lem}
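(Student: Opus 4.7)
My plan is to combine properness of the $G$-action with a subsequential-limit argument to prove the first inclusion, then to add a boundary perturbation argument for the equality when $k\ne 1$.

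For the inclusion, I will assume $D_i\to D$ in $\D(M)$ with each $D_i\in\D^k(M)$. Since the $D_i$ eventually lie in a compact neighborhood of $D$, any sequence $\rho_i\in\Sym(D_i)$ stays in a compact subset of $G$ by properness of $G\le\Isom(M)$, so a subsequence converges to some $\rho\in G$, and $\rho(D)=D$ follows from continuity of the action on $\D(M)$ with the metric of Lemma~\ref{lem:metrizable}. The case $k=2$ is trivial. For $k=1$, I will take $\rho_i$ to be the nontrivial reflection in $\Sym(D_i)$; orientation-reversing involutions form a closed subset of $\Isom(M)$, so the limit $\rho$ is again a reflection, giving $\dim\Fix(D)\le 1$. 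For $k=0$, I will take $\rho_i$ to be a generator of a cyclic rotation subgroup of $\Sym(D_i)$ of order $n_i\ge 2$. If the $n_i$ stay bounded along a subsequence, the limit $\rho$ is a nontrivial rotation and $D\in\D^0$. Otherwise $n_i\to\infty$, and for every $\theta\in[0,2\pi)$ I will choose $m_i$ with $2\pi m_i/n_i\to\theta$ so that $\rho_i^{m_i}$ converges to a rotation by $\theta$ about $c=\lim\Fix(\rho_i)$; since $G$ is closed, every rotation about $c$ is then a symmetry of $D$, which forces $D$ to be a round geodesic disk and again gives $D\in\D^0$.

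The equality when $k\ne 1$ splits into two cases. When $k=0$, the inclusion just proved shows $\D^0(M)$ is closed, which gives the equality. When $k=2$, I need to show $\D^2(M)$ is dense in $\D(M)$. Given $D_0$, I will fix a compact neighborhood $\overline U\supset D_0$; by properness the set $K=\{\rho\in G:\rho(\overline U)\cap\overline U\ne\emptyset\}$ is compact and contains $\Sym(D)$ for every $D$ near $D_0$. Then I will pick a boundary point $q\in\d D_0$ whose $G$-stabilizer is trivial (a dense choice, because the principal orbit stratum of a proper action on a surface is open and dense) and perform a $C^0$-small asymmetric perturbation of $\d D_0$ on a short arc around $q$, for instance by attaching a small asymmetric bump. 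This produces $D$ close to $D_0$, and since no $\rho\in K\setminus\{e\}$ can send the bump to itself, $\Sym(D)=\{e\}$ and $D\in\D^2(M)$.

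The main obstacle will be the density step for $k=2$: I have to verify that a single $C^0$-small boundary modification simultaneously destroys every element of the compact family $K\setminus\{e\}$ as a potential symmetry, which requires care about how $K$-translates of the modified arc interact near $q$ when $G$ has positive dimension. A secondary subtlety is the $n_i\to\infty$ subcase of $k=0$, where one must upgrade discrete rotational symmetries of unbounded order to a continuous rotational symmetry of the limit via the power trick.
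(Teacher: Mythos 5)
Your first inclusion argument is essentially the paper's: extract a convergent subsequence of symmetries $\rho_i\in\Sym(D_i)$ using properness of the $G$-action, observe that the limit $\rho$ lies in $\Sym(D)$, and invoke Lemma \ref{lem:dim}. That part is correct. For $k=1$ your closedness-of-reflections observation is exactly right. For $k=0$, the $n_i\to\infty$ subcase via the power trick is correct but unnecessarily elaborate: it suffices to replace the generator $\rho_i$ by its power $\rho_i^{\lfloor n_i/2\rfloor}$, a rotation by an angle within $\pi/n_i$ of $\pi$, which already converges to a nontrivial rotation. This is exactly how the paper handles the issue in the proof of Lemma \ref{lem:fix} (``we may assume that $\rho_i$ are rotations by an angle which is uniformly bounded away from $0$ and $2\pi$''), and it avoids both the power-trick bookkeeping and the need to identify $D$ as a round disk.

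For the reverse inclusion when $k=2$, the paper itself only asserts density of $\D^2(M)$ in one sentence, so you are not behind the paper in rigor here. However, the specific mechanism you propose has a genuine flaw. You appeal to the principal orbit theorem to find $q\in\partial D_0$ with \emph{trivial} $G$-stabilizer, but the principal stabilizer of a proper action need not be trivial: for $G=\Isom(\R^2)$ acting on $\R^2$, every point has stabilizer isomorphic to $O(2)$, and for $G=O(2)$ every nonzero point is fixed by a reflection, so in these cases your ``dense choice'' is empty. What you actually need is not a condition on the stabilizer of $q$ but a condition on the modified arc: it should admit no nontrivial self-symmetry by an isometry of $M$, and it should be distinguishable from the unmodified portion of $\partial D$ (the latter is delicate in $\C^0$, where curvature is unavailable). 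Your own closing remark correctly flags that this is where the work lies; I would just add that the stabilizer-based reduction you propose is the wrong tool for it.
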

\begin{proof}
Let $D\in\overline{\D^k}(M)$. Then there exists a sequence $D_i\in \D^k(M)$ converging to $D$. Let $\rho_i\in\Sym(D_i)$, and $d$ be the metric on $\D(M)$ given by \eqref{eq:metric}. 
Then 
$$
d( \rho_i(D), D)\leq d( \rho_i(D),D_i)+d(D_i, D)=2d(D, D_i),
$$
since
$
d( \rho_i(D),D_i)=d(\rho_i(D),  \rho_i(D_i))=d(D, D_i).
$
 So $\rho_i(D)\to D$. Each $\rho_i$ fixes some point $p_i\in D_i$ by Brouwer's fixed point theorem. Since $D_i\to D$, the sequence $p_i$ is bounded.  Since $\Isom(M)$ is locally compact, and $G$ is closed, it follows that $\rho_i$
 converges to some $\rho\in G$, after passing to a subsequence. So $\rho_i(D)\to\rho(D)$. Now note that
 \begin{eqnarray}\label{eq:rhos}
 d(\rho(D), D)&\leq& d(\rho(D),\rho_i(D))+ d(\rho_i(D),\rho_i(D_i))+d(\rho_i(D_i),D)\\\notag
 &=& d(\rho(D),\rho_i(D))+ d(D,D_i)+d(D_i,D).
 \end{eqnarray}
So $\rho(D)=D$ which yields $\rho\in\Sym(D)$.  Thus $\Sym(D_i)$ converge to some subgroup of $\Sym(D)$. Since reflections converge to a reflection, and rotations to a rotation, it follows from Lemma \ref{lem:dim} that $D\in\D^\ell(M)$ for some $\ell\leq k$. So $\overline{\D^k}(M)\subset\bigcup_{\ell=0}^k\D^\ell(M)$.
The reverse inclusion holds trivially for $k=0$. For $k=2$ note that any domain in $\D^0(M)$ or $\D^1(M)$ may loose all it symmetries by an arbitrarily small perturbation, and thus belongs to $\overline{\D^2}(M)$.
 \end{proof}

The exclusion of $k=1$ in the above lemma is necessary, because there are domains $D\in\D^0(\R^2)$, such as a generic parallelogram, which have no bilateral symmetry and hence cannot be a limit of domains in $\D^1(\R^2)$.
The arguments in the last lemma also yield the next observation. For $k=0$, $1$ here, continuity  is with respect to the Hausdorff distance between compact subsets of $M$.

\begin{lem}\label{lem:fix}
The mapping $D\mapsto\Fix(D)$ is continuous on $\D^k(M)$.
\end{lem}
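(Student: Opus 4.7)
The plan is to treat the three values of $k$ separately, each time leveraging the compactness observation already used in the proof of Lemma \ref{lem:Ds}: whenever $D_i \to D$ in $\D(M)$ and $\rho_i \in \Sym(D_i)$, the sequence $\rho_i$ subconverges in $G$ to some $\rho \in \Sym(D)$. The case $k = 2$ is immediate, since $\Sym(D)$ is trivial and $\Fix(D) = D$, while convergence in $\D(M)$ via the metric \eqref{eq:metric} directly forces Hausdorff convergence of the domains.

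For $k = 1$, I would let $\rho_i$ and $\rho$ denote the reflections generating $\Sym(D_i)$ and $\Sym(D)$. Any subsequential limit $\tilde{\rho}$ of $\rho_i$ is itself a reflection, since the orientation-reversing isometries form a closed subset of $\Isom(M)$, and by Lemma \ref{lem:dim} the only reflection in $\Sym(D)$ is $\rho$; hence $\tilde{\rho} = \rho$ independently of the subsequence, and the full sequence converges. The fixed geodesics of $\rho_i$ then converge to that of $\rho$ uniformly on compact sets, and intersecting with $D_i \to D$ yields $\Fix(D_i) \to \Fix(D)$ in Hausdorff distance.

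For $k = 0$, set $p_i := \Fix(D_i)$ and extract a subsequence with $p_i \to p \in D$; the task is to show $p = \Fix(D)$. By Lemma \ref{lem:dim}(iii), $\Sym(D_i)$ contains a nontrivial rotation around $p_i$, and I would split on the smallest positive rotation angle $\theta_i$ in $\Sym(D_i)$. If $\theta_i$ is bounded below along a subsequence, then taking $\rho_i$ to be a rotation by $\theta_i$, the sequence $\rho_i$ subconverges to a nontrivial rotation $\rho \in \Sym(D)$ centered at $p$; since the only fixed point of $\rho$ in $M$ is $p$ and $\Fix(D)$ is a nonempty subset of that fixed set, we obtain $p = \Fix(D)$. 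The main obstacle is the complementary case $\theta_i \to 0$, in which $\rho_i$ degenerates to the identity and carries no information about $p$. This is resolved by noting that $\Sym(D_i)$ then contains rotations of arbitrarily small positive angle, so for every target $\theta \in [0, 2\pi]$ one can choose powers of these rotations whose angles approach $\theta$; passing to the limit, every rotation around $p$ preserves $D$, which forces $D$ to be a geodesic disk centered at $p$, so again $\Fix(D) = \{p\}$. In both subcases the subsequential limit is uniquely $\Fix(D)$, so the entire sequence $p_i$ converges to $\Fix(D)$.
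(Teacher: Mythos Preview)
Your argument is correct and follows the same overall strategy as the paper: split by $k$ and use the compactness fact from Lemma~\ref{lem:Ds} that $\rho_i\in\Sym(D_i)$ subconverges to some $\rho\in\Sym(D)$. The details differ mildly in two places. For $k=1$ the paper works point-by-point, showing every subsequential limit of points of $\Fix(D_i)$ lies in $\Fix(D)$, then invokes connectedness and the fact that boundary points go to boundary points to conclude the limit is all of $\Fix(D)$; your route via convergence of the generating reflections (hence of their fixed geodesics) and then intersecting with $D_i\to D$ is a cleaner packaging of the same idea, and in fact makes the lower semicontinuity step (that every point of $\textup{relint}(\Fix(D))$ is approached) more transparent. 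For $k=0$ the paper simply asserts that one may choose $\rho_i$ to be a rotation by an angle uniformly bounded away from $0$ and $2\pi$, which is justified because any nontrivial rotation subgroup of $\Isom(M)$ contains an element with angle in $[\pi,2\pi)$ (take a suitable power); this makes your case split on $\theta_i\to 0$ unnecessary, though your argument there---producing every rotation about $p$ as a limit of powers and concluding $D$ is a geodesic disk---is correct and more than you need, since a single nontrivial rotation about $p$ in $\Sym(D)$ already forces $\Fix(D)=\{p\}$.
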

\begin{proof}
If $k=2$, then $\Fix(D)=D$ and there is nothing to prove. So assume $k\leq 1$.
Let $D_i\in \D^k(M)$ be a sequence of domains which converges to $D\in \D^k(M)$. Choose $p_i\in\Fix(D_i)$. Then $p_i$ converge to a point $p\in D$, after passing to a subsequence. Let $\rho_i\in\Sym(D_i)$. Then $\rho_i(p_i)=p_i$. As discussed in the proof of Lemma \ref{lem:Ds}, $\rho_i$ converge to $\rho\in\Sym(D)$. A computation similar to \eqref{eq:rhos} shows that $\rho(p)=p$. 
If $k=0$, $p_i=\Fix(D_i)$ and we may assume that $\rho_i$ are rotations by an angle which is uniformly bounded away from $0$ and $2\pi$. Then $\rho$ will be a nontrivial rotation. Hence $p=\Fix(D)$ as desired.
Next suppose that $k=1$. Then we may assume that $\rho_i$ are reflections, which yields that so is $\rho$. Consequently, $p\in \Fix(D)$. So $\Fix(D_i)$ converge to a subset $S$ of $\Fix(D)$. 
Since $\Fix(D_i)$ are connected, so is $S$. Furthermore,  if we choose $p_i\in\partial D$, then $p\in\partial D$. So $S$ contains the end points of $\Fix(D)$. Hence $S=\Fix(D)$, which completes the proof.
\end{proof}

If a center $c$ on $A\subset\D(M)$ is $G$-equivariant, then $c(D)\in\Fix(D)$ for all $D\in A$. Thus the last lemma establishes a necessary requirement for the existence of $G$-equivariant centers, which we proceed to construct as follows. Let 
$$
\mathcal{E}^k(M):=\big\{(D,p)\mid D\in\D^k(M)\,,\; p\in\textup{relint}\big(\Fix(D)\big)\big\},
$$
where relint stands for relative interior.
Next let 
$
[D]:=\{\rho(D)\mid\rho\in G\}
$
be the $G$-orbit of $D\in\D^k(M)$,  and
$
[\D^k(M)]:=\D^k(M)/G
$
be the corresponding orbit space.
Also let
$
[(D,p)]:=\{(\rho(D),\rho(p))\mid \rho\in G\}
$
be the $G$-orbit of $(D,p)\in\mathcal{E}^k(M)$, and set
$
[\mathcal{E}^k(M)]:=\mathcal{E}^k(M)/G.
$
The bundle map $\pi\colon \mathcal{E}^k(M)\to \D^k(M)$, given by $\pi((D,p)):=D$, descends to $\tilde\pi \colon [\mathcal{E}^k(M)]\to [\D^k(M)]$
given by $\tilde \pi([(D, p)]):=[D]$. So $\tilde\pi^{-1}([D])$ is homeomorphic to $\textup{relint}(B^k)$.
We also have the natural orbit projections $(D, p)\mapsto[(D, p)]$ and $D\mapsto[D]$, which form the following commuting diagram:
\begin{equation*}
\label{form: cd}
\xymatrix{
\mathcal{E}^k(M) \ar[r]^\pi
\ar[d]&  \D^k(M)\ar[d]  \\
[\mathcal{E}^k(M)]\ar[r]^{\tilde \pi}& [\D^k(M)]}
\end{equation*}
By \cite[Lem. 3.1]{belegradek-ghomi2020} the action of $G$ on $\D^k(M)$ is \emph{proper in the sense of Palais} \cite[Def. 1.2.2]{palais1961}, i.e., each $D\in\D^k(M)$ has a neighborhood $V$ such that for every $D'\in\D^k(M)$ there is a neighborhood $U$ with the property that $\{\rho\in G\mid \rho(U)\cap V\neq\emptyset\}$ has compact closure. Furthermore, since by assumption $G$ is closed, it is a Lie group by Cartan's closed subgroup theorem \cite[Thm. 20.10]{lee2003}. Hence the slice result of Palais \cite[Prop. 2.3.1] {palais1961} for orbits of proper actions applies to $[\D^k(M)]$, which leads to the next observation.

\begin{lem}\label{lem:trivial}
The bundle map $\tilde\pi \colon [\mathcal{E}^k(M)]\to [\D^k(M)]$ is locally trivial.
\end{lem}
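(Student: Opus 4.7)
The plan is to apply Palais's slice theorem (as cited by the authors) to reduce local triviality of $\tilde\pi$ to the construction of a local product structure over a single $H$-invariant slice $S$ through a chosen representative $D_0 \in \D^k(M)$ with $H := \Sym(D_0)$; and then to build that product structure by a direct case analysis on $k\in\{0,1,2\}$, using Lemma \ref{lem:dim} and the continuity of $D\mapsto \Fix(D)$ from Lemma \ref{lem:fix}.

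First I would extract the tube around a fixed orbit $[D_0]$. Palais's theorem yields an $H$-invariant slice $S \subset \D^k(M)$ at $D_0$ such that $G\times_H S \to G\cdot S =: U$ is an equivariant homeomorphism and the induced map $S/H \to U/G$ is a homeomorphism onto a neighborhood of $[D_0]$ in $[\D^k(M)]$. After shrinking $S$ we may assume $\Sym(D) \le H$ for every $D \in S$. A standard computation then identifies $\tilde\pi^{-1}(S/H)$ with $\tilde S/H$, where
\[
\tilde S := \{(D,p) : D \in S,\ p \in \textup{relint}(\Fix(D))\}.
\]
Since by definition $\Sym(D)$ fixes $\Fix(D)$ pointwise, $H$ acts on $\tilde S$ only through the base coordinate, so it suffices to construct an $H$-equivariant homeomorphism $\tilde S \to S \times \textup{relint}(B^k)$ commuting with projection to $S$, with trivial $H$-action on the second factor.

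Next I would treat the three cases afforded by Lemma \ref{lem:dim}. For $k=0$ there is nothing to do: $\Fix(D)$ is a single point, so $\tilde S$ is the graph of the continuous map $D \mapsto \Fix(D)$ and is tautologically homeomorphic to $S$. For $k=1$, Lemma \ref{lem:dim}(ii) forces $\Sym(D)$ to have order two for every $D \in \D^1(M)$; combined with $\Sym(D) \le H$ and $H$ itself of order two (since $D_0 \in \D^1(M)$) this gives $\Sym(D)=H$, so every $D \in S$ has $\Fix(D)$ contained in the common axis $\gamma$ of the nontrivial element of $H$. Parametrizing $\gamma$ by arclength I would identify $\Fix(D) = \gamma \cap D$ with a continuously varying closed interval $I(D) \subset \R$, and use the affine rescaling based on its midpoint and half-length to produce a continuous trivialization $\tilde S \cong S \times (-1,1)$; this is $H$-equivariant because the reflection generating $H$ fixes $\gamma$ pointwise and so acts trivially on the fiber coordinate. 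For $k=2$, Lemma \ref{lem:dim}(i) forces $H$ to be trivial, so $S/H = S$ and $\tilde S / H = \tilde S$, and the desired local triviality of $\tilde\pi$ coincides with the local triviality of $\pi \colon \tilde S \to S$ already proved in Proposition \ref{prop:center} via the Carath\'eodory maps $f_D$ of Lemma \ref{lem:Riemann}.

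The main obstacle I expect is the coherence step in case $k=1$: verifying that a single geodesic axis $\gamma \subset M$ carries $\Fix(D)$ for every $D$ in the slice, and that $\gamma \cap D$ remains a single arc rather than a disjoint union. The first point follows from the slice containment $\Sym(D) \le H$ together with the dimensional constraint from Lemma \ref{lem:dim}; the second is ensured by the Hausdorff continuity of $D \mapsto \Fix(D)$ after an initial shrinking of $S$.
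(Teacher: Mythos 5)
Your proof is correct and follows the same case analysis on $k$ as the paper, using the same ingredients (Lemma \ref{lem:dim}, Lemma \ref{lem:fix}, the Carath\'eodory maps of Lemma \ref{lem:Riemann}, and Palais's slice/cross-section theorem). The structural difference is that you invoke Palais's slice theorem uniformly for all three strata, whereas the paper invokes it explicitly only in the $k=2$ case and treats $k=0,1$ more directly. For $k=1$ this makes a real difference in the mechanics: by restricting to an $H$-invariant slice $S$ you get $\Sym(D)=H$ for every $D\in S$, forcing every $\Fix(D)$ to lie on the single geodesic $\gamma$ fixed by the reflection generating $H$, so affine rescaling along $\gamma$ suffices. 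The paper instead works over an ordinary small neighborhood $U$ of $D_0$ in $\D^1(M)$, where $\Fix(D)$ may lie on a nearby but distinct geodesic, and therefore first projects $\Fix(D)$ into $\Gamma$ through a tubular neighborhood before translating and dilating. Your slice-based route avoids that projection and makes transparent both why the fiber coordinate is $H$-invariant and why the construction descends from $\pi$ to $\tilde\pi$ — a descent step that the paper's $k=1$ treatment leaves somewhat implicit. For $k=2$ the two arguments coincide: $H$ is trivial, a local cross-section $\sigma\colon[U]\to U$ exists, and the Carath\'eodory maps $f_D$ from Proposition \ref{prop:center} give the trivialization. For $k=0$ both are immediate from Lemma \ref{lem:fix}.
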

\begin{proof}
If $k=0$, then $\tilde\pi$ is injective, and by Lemma \ref{lem:fix} is a homeomorphism. 
Next assume  that $k=1$. Let $D_0\in\D(M)$, and $\Gamma$ be the complete geodesic containing $\Fix(D_0)$. By Lemma \ref{lem:fix}, we may choose a neighborhood $U$ of $D_0$ in $\D^k(M)$ so small that $\Fix(D)$ lies in a tubular neighborhood of $\Gamma$ for all $D\in U$. Then $\Fix(D)$ may be projected into a segment $\overline{\Fix(D)}\subset\Gamma$, and canonically mapped to $\Fix(D_0)$, i.e., by a translation in $\Gamma$ so that the midpoint of $\overline{\Fix(D)}$ coincides with that of $\Fix(D_0)$, and then a dilation.
Finally suppose that $k=2$.
As we showed at the end
of Section \ref{sec:general}, by Lemma \ref{lem:Riemann}, there exists a neighborhood $U$ of $D_0$ in $\D^k(M)$ such that 
for every $D\in U$ there exists a homeomorphism $f_D\colon B^2\to D$ depending continuously on $D$. Let $[U]\subset [\D^k(M)]$ be the corresponding neighborhood of $[D_0]$, i.e., the collection of $[D]$ for all $D\in U$. 
By \cite[Prop. 2.3.1]{palais1961} there exists a local slice of $[\D^k(M)]$ through $D_0$, i.e., 
a continuous map $\sigma\colon [U]\to U$ such that $\sigma([D])\in[D]$, and $\sigma([D_0])=D_0$.
Then the mapping 
$$
\tilde\pi^{-1}([U])\;\;\ni\;\;[(D,p)]\longmapsto\Big([D],\, f_{\sigma([D])}^{-1}\big(\sigma([p])\big)\Big)\;\;\in\;\;[U]\times\inte(B^2),
$$
is the desired trivialization, where $\sigma([p])$ denotes the element of $[p]$ in $\sigma([D])$, i.e., $\sigma([p]):=\rho(p)$ where 
$\rho\in G$ is the isometry with $\rho(D)=\sigma(D)$ ($\rho$  is unique because $G$ acts freely on $\D^2(M)$).
\end{proof}

As noted in \cite[Rem. 4.14]{belegradek-ghomi2020}, the $\C^1$-regularity assumption in \cite{belegradek-ghomi2020} was needed only to prove the above lemma. Now that we have established this fact, via Carath\'{e}odory's theorem, the arguments in \cite{belegradek-ghomi2020} complete the proof of Theorem \ref{thm:main}; however, we include a shorter and self-contained argument below. Let $[\D(M)]:=\D(M)/G$.

\begin{lem}\label{lem:DandE}
$[\D(M)]$, $[\D^k(M)]$ and $[\mathcal{E}^k(M)]$ are metrizable.
\end{lem}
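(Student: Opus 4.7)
My plan is to produce explicit $G$-invariant metrics on each of the three spaces and then pass to the quotient via the standard pseudometric construction for proper isometric actions. The first observation is that the metric $d$ on $\D(M)$ from equation \eqref{eq:metric} is $G$-invariant: if $\rho\in G\leq\Isom(M)$ and $f_i\in\textup{Emb}^0(B^2,M)$ parametrizes $D_i$, then $\rho\circ f_i$ parametrizes $\rho(D_i)$ and $\delta(\rho f_1(p),\rho f_2(p))=\delta(f_1(p),f_2(p))$, whence $d(\rho D_1,\rho D_2)=d(D_1,D_2)$. Thus $(\D(M),d)$ is a metric $G$-space, and $\D^k(M)$ inherits a $G$-invariant metric as a subspace. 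For $\mathcal{E}^k(M)$ I would put the $G$-invariant product metric $d+\delta$ on $\D^k(M)\times M$ and restrict.

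Next, I would check that $G$ acts properly on each of the three spaces. Properness on $\D^k(M)$ is recorded in \cite[Lem.~3.1]{belegradek-ghomi2020}, and essentially the same argument handles $\D(M)$: a compact $K\subset\D(M)$ is sequentially compact, and any sequence $g_n\in G$ with $g_n(D_n),D_n\in K$ yields, after extracting a limit $D$ of $D_n$ and choosing $p\in\inte(D)$, an orbit $\{g_n(p)\}$ that is bounded in $M$; properness of $\Isom(M)$ on $M$ then forces $\{g_n\}$ to be relatively compact. Properness on $\mathcal{E}^k(M)$ follows because it is a $G$-invariant subspace of $\D^k(M)\times M$, on which $G$ acts properly as a product.

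With these ingredients in hand, I would invoke the general fact that whenever a locally compact group acts properly by isometries on a metric space $(X,d_X)$, the orbit space $X/G$ is metrizable via
$$
\hat d([x],[y]):=\inf_{g\in G}d_X(x,g\cdot y).
$$
Symmetry and the triangle inequality are routine, and non-degeneracy follows from properness: if $\hat d([x],[y])=0$, then a minimizing sequence $g_n\cdot y\to x$ together with the proper action forces $\{g_n\}$ to be relatively compact, and any subsequential limit $g$ satisfies $g\cdot y=x$. The main obstacle is confirming that $\hat d$ induces the quotient topology rather than a strictly coarser metric topology; this follows from openness of the orbit projection, since the preimage of any $\hat d$-ball $B([x],r)$ equals the open set $\bigcup_{g\in G}g\cdot B_{d_X}(x,r)$, and conversely every quotient-open neighborhood of $[x]$ contains such a ball. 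Applying this construction to $(X,d_X)=(\D(M),d),\;(\D^k(M),d),\;(\mathcal{E}^k(M),d+\delta)$ yields metrics on $[\D(M)]$, $[\D^k(M)]$ and $[\mathcal{E}^k(M)]$ that induce their quotient topologies.
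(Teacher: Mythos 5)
Your proof is correct, but it takes a genuinely different route from the paper's. The paper proceeds top-down and by citation: it first shows $[\D(M)]$ is metrizable via Palais's metrizability theorem for orbit spaces of proper actions, which requires checking that $\D(M)$ is both metrizable and \emph{separable} (the latter done by viewing $\D(M)$ as a continuous image of $\mathrm{Emb}^0(B^2,M)$, a subspace of the second-countable space $\C^0(B^2,M)$); it then gets $[\D^k(M)]$ metrizable as a subspace, and deduces metrizability of $[\mathcal{E}^k(M)]$ from a fiber-bundle metrizability theorem of Etter and Griffin together with the local triviality established in Lemma \ref{lem:trivial}. You instead build explicit $G$-invariant metrics on each total space (observing that $d$ is $G$-invariant because $\delta$ is and $G\le\Isom(M)$, and taking a product metric on $\D^k(M)\times M$ restricted to $\mathcal{E}^k(M)$), verify properness on each, and then run the standard quotient pseudometric $\hat d([x],[y])=\inf_{g}d_X(x,g\cdot y)$, checking by hand that it is a genuine metric inducing the quotient topology. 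Your route is more elementary and self-contained: it avoids both the separability verification and the fiber-bundle lemma, and makes the metrics on the orbit spaces completely explicit. What the paper's route buys is brevity and minimal case analysis---one application of Palais plus one citation handles all three spaces, with properness only needed once. Two small points worth tightening in your write-up: (i) your non-degeneracy argument extracts a convergent subsequence of $(g_n)$, which uses that $G$ is a Lie group (hence locally compact, second countable), a fact the paper records via Cartan's closed subgroup theorem; and (ii) the reference \cite[Lem.~3.1]{belegradek-ghomi2020} establishes properness in the sense of Palais, which coincides with the Bourbaki notion you use here precisely because $G$ is locally compact and the space is metrizable---this equivalence is tacit in your argument and should be acknowledged.
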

\begin{proof}
A locally trivial fiber bundle with a metrizable base and fiber is metrizable \cite{etter-griffin1954}. So, by Lemma \ref{lem:trivial}, $[\mathcal{E}^k(M)]$ is metrizable if $[\D^k(M)]$ is metrizable. Since $[\D^k(M)]\subset [\D(M)]$, it suffices to check that $[\D(M)]$ is metrizable.
By a result of Palais \cite[Thm. 4.3.4]{palais1961}, $[\D(M)]$ is metrizable, provided that $\D(M)$ is metrizable and separable.
By Lemma \ref{lem:metrizable}, $\D(M)$ is metrizable.  To see that $\D(M)$ is separable, note that it may be identified with
$\mathrm{Emb}^0(B^2, M)/\mathrm{Homeo}(B^2)$. So it suffices to check that  $\mathrm{Emb}^0(B^2, M)$ is separable, because  continuous image of a separable space is separable \cite[Thm. 16.4a]{willard2004}. But $\mathrm{Emb}^0(B^2, M)$ is a subspace of $\C^0(B^2, M)$, which is second countable \cite[Thm. XII.5.2]{dugundji1966}. Hence $\mathrm{Emb}^0(B^2, M)$ is second countable, and therefore separable \cite[Thm. 30.2]{munkres2000}.
\end{proof}

Using the last three observations, we can now establish the main step in the proof of Proposition \ref{prop:model}:

\begin{lem}\label{lem:D2}
Any $G$-equivariant center on a closed $G$-invariant subset of $\D^k(M)$ 
 may be extended to a $G$-equivariant center on $\D^k(M)$.
\end{lem}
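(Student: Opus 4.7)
The plan is to recast the problem as the existence of a section of the quotient bundle $\tilde\pi\colon [\mathcal{E}^k(M)]\to [\D^k(M)]$, which will then be supplied by Palais's selection principle (Lemma \ref{lem:palais}).

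First I would set up a natural correspondence between $G$-equivariant centers on a $G$-invariant set $A\subset \D^k(M)$ and sections of $\tilde\pi$ over $[A]$. Given such a center $c$, equivariance forces $c(D)\in\Fix(D)$ for all $D\in A$ (since $\rho(c(D))=c(\rho(D))=c(D)$ for any $\rho\in\Sym(D)$), and together with $c(D)\in\inte(D)$ this places $c(D)$ in $\textup{relint}(\Fix(D))$ by Lemma \ref{lem:dim}. Hence $\tilde c(D):=(D,c(D))$ is a $G$-equivariant section of $\pi$, which descends to a section $s$ of $\tilde\pi$ over $[A]$. Conversely, given a section $s$ and $D\in A$, I would write $s([D])=[(D_0,p_0)]$, pick any $\rho\in G$ with $\rho(D_0)=D$, and set $c(D):=\rho(p_0)$. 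Because $p_0\in\Fix(D_0)$ is fixed by every element of $\Sym(D_0)$, replacing $\rho$ by $\rho\sigma$ for $\sigma\in\Sym(D_0)$ leaves $\rho(p_0)$ unchanged, so $c$ is well defined; equivariance and $c(D)\in\inte(D)$ follow immediately, and continuity is read off from the local trivializations of $\tilde\pi$ constructed in Lemma \ref{lem:trivial}.

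Next I would observe that $[A]$ is closed in $[\D^k(M)]$: since $A$ is $G$-invariant, the preimage of $[A]$ under the quotient map $\D^k(M)\to[\D^k(M)]$ is $A$ itself, which is closed by hypothesis. At this point every hypothesis of Lemma \ref{lem:palais} is in place for $\tilde\pi$: the base $[\D^k(M)]$ is metrizable by Lemma \ref{lem:DandE}, each fiber is homeomorphic to $\textup{relint}(B^k)$ and so is metrizable and contractible, and $\tilde\pi$ is locally trivial by Lemma \ref{lem:trivial}. Applying the lemma extends the section over $[A]$ corresponding to the given center to a global section over $[\D^k(M)]$, which lifts back via the correspondence above to the desired $G$-equivariant center on $\D^k(M)$ that agrees with the original center on $A$.

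The main obstacle will be the bookkeeping required in the section-to-center passage: one must verify that the prescription $D\mapsto\rho(p_0)$, which involves a locally made choice of $\rho\in G$, varies continuously in $D$. This rests on the explicit local trivializations built in Lemma \ref{lem:trivial} and, when $k=2$, on the Palais slice used there, together with the fact that $\Sym(D_0)$ fixes $p_0$ so that the ambiguity in the choice of $\rho$ is invisible to the output. Beyond this point the argument is purely bundle-theoretic and draws only on results already established in the preceding lemmas.
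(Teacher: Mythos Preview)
Your proposal is correct and follows essentially the same route as the paper: pass to the quotient bundle $\tilde\pi\colon[\mathcal{E}^k(M)]\to[\D^k(M)]$, invoke Lemmas~\ref{lem:trivial} and~\ref{lem:DandE} to meet the hypotheses of Lemma~\ref{lem:palais}, extend the section, and lift back using that points of $\Fix(D)$ are fixed by $\Sym(D)$. The only notable difference is in the continuity check for the lifted center: you propose reading it off the local trivializations (and Palais slices) directly, whereas the paper argues by sequences, using Lemma~\ref{lem:fix} to trap subsequential limits in $\Fix(D)$ and then the Hausdorffness of $[\mathcal{E}^k(M)]$ (from Lemma~\ref{lem:DandE}) to pin down the limit uniquely.
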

\begin{proof}
Let $A$ be a closed $G$-invariant subset of $\D^k(M)$, and $c$ be a $G$-equivariant center on $A$. 
Define $C\colon A\to \mathcal{E}^k(M)$, by $C(D):=(D,c(D))$. Then $C$ descends to  the map $\tilde C\colon A/ G\to[\mathcal{E}^k(M)]$ given by $\tilde C([D])=[C(D)]$. Since $A$ is $G$-invariant and closed, $A/G$ is closed. So, by Lemmas \ref{lem:palais} and \ref{lem:trivial}, $\tilde C$ can be extended to $[\D^k(M)]$.
Now for $D\in \D^k(M)$ we define $\tilde  c(D)$ as the point in $\Fix(D)$ such that
$$
[(D,\tilde  c(D))]=\tilde C([D]).
$$
Note that $\tilde  c(D)$ is indeed unique, because if
 $[(D, x)]=[(D, \tilde  c(D))]$, then $x= \rho(\tilde  c(D))$ for some $\rho\in\Sym(D)$. But $\rho(\tilde  c(D))=\tilde  c(D)$ since $\tilde  c(D)\in\Fix(D)$. So $x=\tilde c(D)$.
Next, to see that $\tilde  c$ is $G$-equivariant, note that
$$
[(\rho(D),\tilde  c(\rho(D))]=\tilde C([\rho(D)])= \tilde C([D])=[(D,\tilde  c(D))]=[(\rho(D),\rho(\tilde  c(D)))].
$$
So $\rho(\tilde  c(D))=\rho'(\tilde  c(\rho(D)))$ for some $\rho'\in\Sym(\rho(D))$. But $\tilde  c(\rho(D))\in\Fix(\rho(D))$.
So $\rho'(\tilde  c(\rho(D)))=\tilde  c(\rho(D))$, and we conclude that $\rho(\tilde  c(D)))=\tilde  c(\rho(D))$.
Next we check that $\tilde  c=c$ on $A$. If $D\in A$, then $[(D, \tilde  c(D))]=\tilde C([D])=[C(D)]=[(D, c(D))]$. So $\tilde  c(D)=\rho(c(D))$ for some $\rho\in\Sym(D)$. Since $c$ is equivariant, $\rho(c(D))=c(\rho(D))=c(D)$. So $\tilde  c(D)=c(D)$ for $D\in A$.
Finally we check that $\tilde  c$ is continuous. Let $D_i\in\D^k(M)$ converge to $D$. By Lemma \ref{lem:fix}, $\tilde  c(D_i)$ converges to some point $x\in\Fix(D)$, after passing to a subsequence. So $[(D_i, \tilde  c(D_i))]$ converges to $[(D,x)]$. But $[(D_i,\tilde  c(D_i))]=\tilde C([D_i])$ which converges to $\tilde C([D])=[(D,\tilde  c(D))]$. By Lemma \ref{lem:DandE}, $[\mathcal{E}^k(M)]$ is Hausdorff. So $[(D_i, \tilde  c(D_i))]$ must have a unique limit, which yields $[(D,x)]=[(D,\tilde  c(D))]=\tilde C([D])$. Thus $x=\tilde  c(D)$ by the uniqueness of $\tilde  c(D)$.
\end{proof}

The last fact we need will ensure that the extensions given by the last lemma may  join each other continuously across  $\D(M)$. We establish this fact via the equivariant version of Tietze's extension theorem which was first proved by Gleason \cite{gleason1950} and later generalized by Feragen \cite{feragen2010}. The original Tietze's theorem states that continuous functions on closed subsets of a normal topological space extend to the entire space.

\begin{lem}\label{lem:last}
Let $A\subset \overline{\D^{k}}(M)$ be a closed $G$-invariant  set, and $c$ be a $G$-equivariant center on $A$. Then $c$ may be extended to a $G$-equivariant center  on  a $G$-invariant closed set which contains an open neighborhood of $A$.
\end{lem}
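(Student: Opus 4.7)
My plan is to extend $c$ to a continuous $G$-equivariant map into $M$ on a neighborhood of $A$ by invoking the equivariant Tietze extension theorem of Gleason \cite{gleason1950} and Feragen \cite{feragen2010}, and then to shrink that neighborhood so the extension lands in the interiors of the domains.

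First I will check the hypotheses of equivariant Tietze. The tautological $G$-action on $\D(M)$ is proper: for a compact $K\subset\D(M)$, the union $\bigcup_{D\in K}D$ is a compact subset of $M$, and any $\rho\in G$ with $\rho(K)\cap K\neq\emptyset$ must send a point of this union into the same union, so properness of $G$ on $M$ confines such $\rho$ to a relatively compact subset of $G$ (closedness follows from compactness of $K$). The target $M$ is a $G$-ANE since it is a smooth manifold carrying a proper action by isometries. Applying equivariant Tietze will yield a $G$-invariant open $U\supset A$ in $\D(M)$ and a $G$-equivariant continuous $\tilde c\colon U\to M$ extending $c$. The crucial automatic consequence is that $\tilde c(D)\in\Fix(D)$ for every $D\in U$: for $\rho\in\Sym(D)$, equivariance gives $\rho(\tilde c(D))=\tilde c(\rho(D))=\tilde c(D)$.

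It remains to force $\tilde c(D)$ to lie in $\inte(D)$. I will observe that
\[
\mathcal{E}(M)=\big\{(D,p)\mid p\in\inte(D)\big\}
\]
is open in $\D(M)\times M$, because a point at positive distance from $\partial D$ remains interior under sufficiently small $\C^0$-perturbations of both the domain and the point. Hence
\[
W:=\big\{D\in U\mid\tilde c(D)\in\inte(D)\big\}
\]
is open in $\D(M)$, $G$-invariant by equivariance, and contains $A$ since $c$ is a center on $A$. To package this into a closed set, I will use that the metric $d$ of Lemma \ref{lem:metrizable} is manifestly $G$-invariant, so $d(\cdot,A)$ and $d(\cdot,W^c)$ are $G$-invariant continuous functions on $\D(M)$. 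Setting
\[
V:=\big\{D\mid d(D,A)<\tfrac{1}{2}\,d(D,W^c)\big\}
\]
(with the convention $d(\cdot,\emptyset)=\infty$), one checks that $V$ is open, $G$-invariant, contains $A$, and satisfies $\overline V\subset W$: if $D\in\overline V\cap W^c$, then $d(D,W^c)=0$ forces $d(D,A)=0$, whence $D\in A\subset W$, a contradiction. Then $B:=\overline V$ is the required $G$-invariant closed set, and $\tilde c|_B$ is the desired extension.

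The main obstacle I anticipate is marshalling the equivariant Tietze theorem in the needed form — in particular, verifying properness of the $G$-action on the whole of $\D(M)$, rather than only on the strata $\D^k(M)$ where this is already established in \cite{belegradek-ghomi2020}, and confirming that $M$ qualifies as a $G$-ANE. Once these prerequisites are secured, the rest of the argument reduces to routine manipulations with $G$-invariant distance functions on the metrizable space $\D(M)$.
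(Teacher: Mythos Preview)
Your proposal is correct and reaches the same conclusion, but the route differs from the paper's in one key respect. The paper does \emph{not} invoke the $G$-ANE property of $M$ abstractly; instead it makes that property explicit by embedding each model plane $M$ equivariantly into $\R^3$ via a representation $\Isom(M)\to GL(3)$ (inclusion for $\S^2$, affine-to-linear for $\R^2\times\{1\}$, hyperboloid model for $\H^2$), applies Feragen's theorem with the linear target $\R^3$ to extend $c$ to all of $\overline{\D^k}(M)$, and then composes with an explicit $G$-equivariant tubular retraction $\mathcal{T}\to M$ on a small $\epsilon$-neighborhood $\overline U$ of $A$. Your appeal to ``$M$ is a $G$-ANE'' is really a repackaging of exactly this embedding-plus-retraction step, so the underlying content is the same; what you gain is brevity, what you lose is the self-contained verification that the equivariant Tietze machinery applies with target $M$ rather than a linear $G$-space. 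Your second stage --- passing from the open set $W=\{D:\tilde c(D)\in\inte(D)\}$ to a $G$-invariant closed $\overline V$ with $A\subset V\subset\overline V\subset W$ via the $G$-invariant metric $d$ --- is actually a bit more careful than the paper's corresponding step, which simply asserts that a single $\epsilon>0$ can be chosen so that $\overline c(D)\in\inte(D)$ on the $\epsilon$-ball about $A$; your formulation sidesteps any uniformity worry when $A$ is noncompact.
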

\begin{proof}
Each model plane $M$ admits a standard embedding $M\to\R^3$ that is equivariant
with respect to a representation $\Isom(M)\to GL(3)$. 
If $M=\S^2$, we have the inclusion map $\S^2\to\R^3$, and the identification of 
$\Isom(\S^2)$ with $O(3)\leq GL(3)$.  If $M=\R^2$, we identify $\R^2$ with the plane $\R^2\times\{1\}\subset\R^3$ and note that $\Isom(\R^2)$ is a subgroup of affine transformations of $\R^2$, which may be represented as a subgroup of $GL(3)$ via homogeneous coordinates. If $M=\H^2$, we consider the hyperboloid model of $\H^2$ in $\R^3$, 
so that $\Isom(\H^2)$ lies in the Lorentz group $O(1,2)\leq GL(3)$.

Thus $c$ may be viewed as a $G$-equivariant map
$c\colon A\to \R^3$. Now by Feragen's theorem \cite[Thm. 3.1]{feragen2010}
$c$ may be extended to a $G$-equivariant map $c\colon\overline{\D^{k}}(M)\to\R^3$, because $\overline{\D^{k}}(M)\subset \D(M)$ and $[\overline{\D^{k}}(M)]\subset[\D(M)]$ are metrizable  by Lemmas \ref{lem:metrizable} and \ref{lem:DandE} respectively. 
Let $U$ be the open $\epsilon$-neighborhood of $A$ in $\D(M)$ with respect to the metric $d$ given by \eqref{eq:metric}, and $\ol U$ be the closure of $U$. Since $d$ is $G$-invariant, 
$\ol U$ is $G$-invariant. Furthermore, we may choose $\epsilon$ so small that $c(\ol U)$ lies in a $G$-equivariant tubular neighborhood $\mathcal{T}$ of $M$ in $\R^3$. Then the $G$-equivariant projection $\pi\colon\mathcal{T}\to M$ composed with $c$ yields a $G$-invariant map $\ol c\colon \ol U\to M$. If $\epsilon$ is sufficiently small, we can make sure that $\ol c(D)\in\inte(D)$ for all $D\in \ol U$, as desired.

We note that $\mathcal{T}$ and $\pi$ may be constructed as follows. Fix $r\in (0,1)$ and let $p:=(x,y,z)\in\R^3$. 
If $M=\S^2$ or $\R^2\times\{1\}$, 
let $\mathcal{T}$ be the open $r$-neighborhood of $M$ in $\R^3$, with $\pi(p):=p/|p|$, and $\pi(p):=(x/z,y/z, 1)$ respectively. If $M$ is the hyperboloid model of $\H^2$, given by $Q(p):=z^2-x^2-y^2=1$ and $z>0$,
 let $\mathcal{T}$ be given by $1-r<Q(p)<1+r$ and $z>0$,
with $\pi(p):=p/\sqrt{Q(p)}$. 
\end{proof}

Applying the last two lemmas alternately,  we now establish the main result of this section, which concludes the proof of Theorem \ref{thm:main}:

\begin{proof}[Proof of Proposition \ref{prop:model}]
Let $A\subset\D(M)$ be a $G$-invariant closed set, and $c$ be a $G$-equivariant center on $A$. By Lemma \ref{lem:Ds}, $\D^0(M)=\overline{\D^0}(M)$. So $\D^0(M)$ is closed, which implies that $A\cap\D^0(M)$ is closed. Thus, 
by Lemma \ref{lem:D2}, we may extend $c$ to $\D^0(M)$.
Next,
by Lemma \ref{lem:last}, we may extend $c$ to a $G$-invariant closed set which contains an open neighborhood of $\overline{\D^0}(M)$ in $\overline{\D^{1}}(M)$. Then we may extend $c$ to $\D^{1}(M)$ by Lemma \ref{lem:D2}. Thus $c$ is now defined on $\D^0(M)\cup\D^{1}(M)$, which includes $\overline{\D^{1}}(M)$ by Lemma \ref{lem:Ds}. Again using Lemma \ref{lem:last} we may extend $c$ to a $G$-invariant closed subset of $\overline{\D^{2}}(M)$ which includes an open neighborhood of $\overline{\D^{1}}(M)$. Then we extend $c$ to all of $\overline{\D^{2}}(M)$ by Lemma \ref{lem:D2}. But $\overline{\D^{2}}(M)=\D(M)$ by Lemma \ref{lem:Ds}, which completes the proof.
\end{proof}

We also obtain a version of Theorem \ref{thm:main} for conformal transformations:

\begin{thm}\label{cor:conformal}
Let $G=\Conf(\R^2)$ and $A\subset\D(\R^2)$ be a closed $G$-invariant set. Then any $G$-equivariant center on $A$ may be extended to a $G$-equivariant center on $\D(\R^2)$.
\end{thm}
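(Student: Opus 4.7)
My plan is to rerun the proof of Proposition \ref{prop:model} with $G=\Conf(\R^2)$ in place of a closed subgroup of $\Isom(M)$. Two places in the original argument appeal to the fact that $G$ consists of isometries: the properness of the action, and the equivariant embedding of $M$ into $\R^3$ used in Lemma \ref{lem:last}. I will check that each step goes through for the similarity group $\Conf(\R^2)$, whose elements are $x\mapsto r\rho(x)+b$ with $r>0$, $\rho\in O(2)$, and $b\in \R^2$.

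The first point is that although $\Conf(\R^2)$ does not act properly on $\R^2$ (Lemma \ref{lem:proper}), it does act properly on $\D(\R^2)$. Indeed, every Jordan domain has positive finite diameter, so if $K\subset\D(\R^2)$ is compact and $g(D)\in K$ for some $D\in K$ with $g(x)=r\rho(x)+b$, then $r=\mathrm{diam}(g(D))/\mathrm{diam}(D)$ lies in a compact subinterval of $(0,\infty)$ since both diameters are bounded above and below on $K$; the orthogonal parts $\rho$ lie in the compact group $O(2)$; and the translations $b$ are bounded because all $D\in K$ lie in a common bounded region of $\R^2$. Hence the set of $g$ with $gK\cap K\neq\emptyset$ has compact closure in $\Conf(\R^2)$.

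The second point is that for any $D\in\D(\R^2)$ the $\Conf(\R^2)$-stabilizer of $D$ coincides with its $\Isom(\R^2)$-stabilizer, by the same argument as in Lemma \ref{lem:proper}: a non-isometric similarity has a unique fixed point $x_0$, and $|g^n(x)-x_0|=r^n|x-x_0|$ tends to $0$ or $\infty$ for $x\neq x_0$, which rules out invariance of a bounded domain. Consequently $\Sym(D)$, the stratification $\D(\R^2)=\D^0(\R^2)\cup\D^1(\R^2)\cup\D^2(\R^2)$, and Lemmas \ref{lem:dim}, \ref{lem:Ds}, \ref{lem:fix} hold verbatim. Combined with properness on each $\D^k(\R^2)$ and the fact that $\Conf(\R^2)$ is a Lie group, Palais's slice theorem supplies the input to Lemma \ref{lem:trivial}, and Lemma \ref{lem:D2} then follows.

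For the ambient embedding required by Lemma \ref{lem:last}, I would embed $\R^2$ as $\R^2\times\{1\}\subset\R^3$ and represent $g(x)=r\rho(x)+b$ in $GL(3)$ by the block matrix $\begin{pmatrix}r\rho & b\\ 0 & 1\end{pmatrix}$. The slab $\mathcal{T}:=\R^2\times(1-\epsilon,1+\epsilon)$ is then $G$-invariant, and the projection $\pi(x,y,z):=(x/z,y/z,1)$ is $G$-equivariant by direct computation — this is the very formula already used for $\Isom(\R^2)$ in the proof of Lemma \ref{lem:last}. Feragen's equivariant Tietze theorem therefore applies over the metrizable orbit space $[\overline{\D^k}(\R^2)]$, giving the conformal analogue of Lemma \ref{lem:last}. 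The alternating induction on $k=0,1,2$ used in the proof of Proposition \ref{prop:model} then finishes the argument. The main technical obstacle is the first step — verifying properness of the $\Conf(\R^2)$-action on $\D(\R^2)$ despite its failure on $\R^2$; once this is in hand, everything else is a careful re-reading of Section \ref{sec:equiv} with $\Isom$ replaced by $\Conf$.
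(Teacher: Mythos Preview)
Your approach is sound and genuinely different from the paper's. You rerun the machinery of Section~\ref{sec:equiv} for $G=\Conf(\R^2)$, verifying that properness holds on $\D(\R^2)$ (though not on $\R^2$), that stabilizers of Jordan domains under $\Conf(\R^2)$ coincide with those under $\Isom(\R^2)$, and that the equivariant linear embedding into $\R^3$ survives. The paper instead takes a shortcut: it invokes Theorem~\ref{thm:main} as a black box to obtain an $O(2)$-equivariant extension $c$, then upgrades to full $\Conf(\R^2)$-equivariance by conjugating with the similarity $\rho_D(x):=(x-x_0(D))/R(D)$ that carries $D$ to a domain inscribed in $\S^1$, setting $\ol c(D):=\rho_D^{-1}\circ c\circ\rho_D(D)$. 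Equivariance under translations and dilations then follows from the identity $\rho_{g(D)}\circ g=\rho_D$, and under $O(2)$ from $\rho_{g(D)}\circ g=g\circ\rho_D$ together with the $O(2)$-equivariance of $c$. The paper's argument is much shorter and avoids re-verifying properness, Palais's slice theorem, and Feragen's extension in the conformal setting; your route has the merit of showing directly that nothing in Section~\ref{sec:equiv} is genuinely special to isometries. One small point to patch: in Lemma~\ref{lem:last} the paper uses $G$-invariance of the metric $d$ on $\D(\R^2)$ to produce the $G$-invariant closed neighborhood $\ol U$, and $d$ is not invariant under dilations. Since you already have that $[\overline{\D^k}(\R^2)]$ is metrizable (hence normal), you can instead construct $\ol U$ by pulling back a closed neighborhood of $[A]$ from the orbit space.
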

\begin{proof}
By Theorem \ref{thm:main}, $c$ may be extended to an $O(2)$-equivariant center on $\D(\R^2)$. For each $D\in\D(\R^2)$, let $R(D)$ be the radius and $x_0(D)$ be the center of the circumscribing circle of $D$. Define 
 $\rho_D\in G$ by
 $
 \rho_D(x):=(x-x_0(D))/R(D),
 $
 so that $\rho_D(D)$ is inscribed in $\S^1$. Then
 $$
 \ol c(D):=\rho_D^{-1}\circ c\circ\rho_D(D)
 $$ 
is a center on $\D(\R^2)$, which coincides with $c$ on $A$. We claim that $g(\ol c(D))=\ol c(g(D))$, for all $g\in G$ and $D\in\D(\R^2)$, which will complete the proof.  First assume that $g$ is a translation or a dilation, i.e., $g(x)=r x+b$ for some $r>0$ and $b\in\R^2$. Then one may quickly check that $\rho_{g(D)}\circ g=\rho_D$, which implies $\rho^{-1}_{g(D)}=g\circ\rho_D^{-1}$. Hence
 $$
 \ol c(g(D))=\rho_{g(D)}^{-1}\circ c\circ\rho_{g(D)}(g(D))=g\circ\rho_{D}^{-1}\circ c\circ\rho_{D}(D)=g(\ol c(D)),
 $$
 as desired. Next suppose that $g\in O(2)$. Again a simple computation shows that $\rho_{g(D)}\circ g=g\circ \rho_D$, which implies $\rho_{g(D)}^{-1}\circ g=g\circ\rho_D^{-1}$. So, since $c$ is $O(2)$-equivariant,
 $$
 \ol c(g(D))=\rho_{g(D)}^{-1}\circ c\circ g\circ\rho_{D}(D)=\rho_{g(D)}^{-1}\circ g\circ c\circ\rho_{D}(D)=g(\ol c(D)).
 $$
This completes the proof since every element of $G$ is composed of a translation, a dilation, and a rotation.
  \end{proof}
 
\begin{note}\label{note:conformal}
Theorem \ref{cor:conformal} holds trivially for $\H^2$, since $\Conf(\H^2)=\Isom(\H^2)$; however, it does not hold for $\S^2$. Indeed $\Conf(\S^2)$ is generated by inversions through circles.  In particular, if $D\in\D(\S^2)$ is a hemisphere, $\Sym(D)$ is generated by inversions through circles centered on $\partial D$, which act transitively on $D$. Hence $\Fix(D)=\emptyset$.
\end{note}

\section{Applications}\label{sec:application}
Here we develop some applications of  Theorem \ref{thm:main} and its conformal analogue, Theorem \ref{cor:conformal}, which illustrate the utility of centers when used in conjunction with Carath\'{e}odory's theorem (Lemma  \ref{lem:Riemann}). A Jordan domain $D\in\D(M)$ is \emph{round} if there is a point in $D$ which has constant distance from all points of $\partial D$.
Let $\mathcal{RD}(M)\subset\D(M)$ denote the space of round Jordan domains in $M$. 

\begin{prop}\label{cor:contract}
$\D(\R^2)$ admits a strong deformation retraction onto $\mathcal{RD}(\R^2)$ which is equivariant 
under $\Conf(\R^2)$. Furthermore, $\D(\R^2)$ is equivariantly contractible with respect to $O(2)\subset\Isom(\R^2)$.
\end{prop}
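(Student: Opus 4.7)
The plan is to use Theorem \ref{cor:conformal} to secure a $\Conf(\R^2)$-equivariant center $c$, to extract a conformal radius from the Riemann map based at $c(D)$, and then to deform $D$ via a scaling trick that sends $f_D(sB^2)$ to its first-order linearization as $s\to 0$. The second statement will follow formally by concatenating the first deformation retraction with an explicit $O(2)$-equivariant contraction of $\mathcal{RD}(\R^2)$.

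Let $c\colon\D(\R^2)\to\R^2$ be a $\Conf(\R^2)$-equivariant center obtained by applying Theorem \ref{cor:conformal} to the tautological center $p+\rho B^2\mapsto p$ on the closed $\Conf(\R^2)$-invariant set $\mathcal{RD}(\R^2)$. By Lemma \ref{lem:Riemann}, each $D$ admits a Riemann map $f_D\colon B^2\to D$ with $f_D(0)=c(D)$, unique up to pre-composition with a rotation of $B^2$; hence the conformal radius $r(D):=|f_D'(0)|$ is well-defined, depends continuously on $D$ (by Carath\'eodory kernel convergence together with the Cauchy integral formula for holomorphic derivatives), and satisfies $r(gD)=\lambda\, r(D)$ for every $g\in\Conf(\R^2)$ of scale factor $\lambda$. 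Define
$$\phi_s(D):=c(D)+\tfrac{1}{s}\bigl(f_D(sB^2)-c(D)\bigr)\quad\text{for } s\in(0,1],\qquad \phi_0(D):=c(D)+r(D)B^2.$$
Since $sB^2$ is rotation-invariant, the set $f_D(sB^2)$ depends only on $D$, and $\phi_s(D)$ is a Jordan domain (a rescaled conformal image of $sB^2$ about $c(D)$). One has $\phi_1(D)=D$, and Taylor's formula $f_D(sz)=c(D)+sf_D'(0)z+o(s)$ (uniform in $z\in B^2$) shows that the parametrizations $z\mapsto c(D)+\tfrac{1}{s}(f_D(sz)-c(D))$ converge uniformly to $z\mapsto c(D)+f_D'(0)z$, whose image equals $c(D)+|f_D'(0)|B^2=\phi_0(D)$. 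Equivariance is immediate because the Riemann map of $gD$ has the form $g\circ f_D\circ h$ for some rotation $h$ of $B^2$, and the defining formula is manifestly invariant under such $h$. For $D=p+\rho B^2\in\mathcal{RD}(\R^2)$ the Riemann map is affine, giving $\phi_s(D)=D$ for all $s$, so $H(D,t):=\phi_{1-t}(D)$ is the desired $\Conf(\R^2)$-equivariant strong deformation retraction onto $\mathcal{RD}(\R^2)$.

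For the second statement, the homeomorphism $\mathcal{RD}(\R^2)\cong \R^2\times\R_{>0}$ sending $p+\rho B^2\mapsto(p,\rho)$ intertwines the $O(2)$-action with the standard action on $\R^2$ and the trivial action on $\R_{>0}$. The linear homotopy $((p,\rho),t)\mapsto((1-t)p,(1-t)\rho+t)$ is $O(2)$-equivariant and contracts $\mathcal{RD}(\R^2)$ onto its $O(2)$-fixed point $B^2$. Concatenation with $H$ yields an $O(2)$-equivariant contraction of $\D(\R^2)$.

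The main obstacle will be joint continuity of $\phi_s(D)$ at $s=0$: one needs the Taylor remainder $\tfrac{1}{s}(f_D(sz)-c(D))-f_D'(0)z$ to vanish uniformly in $z\in B^2$ as $(s,D)\to(0,D_0)$. This reduces to a uniform bound on $|f_D''|$ on a small disk about $0$ as $D$ ranges over a neighborhood of $D_0$, which follows from Cauchy's integral formula applied to the uniform convergence $f_D\to f_{D_0}$ supplied by Lemma \ref{lem:Riemann}.
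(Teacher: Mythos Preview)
Your proof is correct and follows essentially the same approach as the paper: obtain a $\Conf(\R^2)$-equivariant center via Theorem~\ref{cor:conformal}, rescale the Riemann map about that center to deform $D$ toward its linearization at the origin, and then contract $\mathcal{RD}(\R^2)$ to $B^2$ by an $O(2)$-equivariant linear homotopy. You in fact streamline one step by observing directly that $\phi_0(D)=c(D)+|f_D'(0)|B^2$ is already round, whereas the paper records only that $(df_D)_o(B^2)$ is convex and appends an additional Minkowski-sum homotopy to a round disk; one small slip is that the element $h$ with $f_{gD}=g\circ f_D\circ h$ need only lie in $O(2)$ (it is a reflection when $g$ reverses orientation), but since $h(sB^2)=sB^2$ this does not affect the argument.
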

\begin{proof}
Let $G=\Conf(\R^2)$. By Theorem \ref{cor:conformal} there exists a $G$-equivariant center $c$ on $\D(\R^2)$. 
For every $D\in\D(\R^2)$ let $f_D:=f_{D, c(D), (1,0)}$ be as in Lemma \ref{lem:Riemann}. If $g\in G$, then $g(f_D(o))=g(c(D))=c(g(D))=f_{g(D)}(o)$ by equivariance of $c$. So
$f_{g(D)}^{-1}\circ g\circ f_D$ is a conformal transformation 
of $B^2$ that fixes $o$, and hence lies in $O(2)$. Thus
\begin{equation}\label{eq:gfD}
g(f_D(tB^2))=f_{g(D)}(tB^2)
\end{equation}
for all $t\in [0,1]$. In particular note that if $g$ is a translation, 
$f_{g(D)}^{-1}\circ g\circ f_D$ is the identity because it preserves 
the direction of $(1,0)$ by definition of $f_D$.
For $t\in (0,1]$ and $D\in\D(\R^2)$, define 
$
h_{t, D}\colon B^2\to\R^2
$ 
by 
$$
h_{t, D}(x):=f_D(o)+\frac{f_D(tx)-f_D(o)}{t},
$$
and set $h_0(x):=\underset{t\to 0}{\lim}\ h_{t, D}(x)=f_D(o)+(df_D)_o(x)$. 
Then $D_t:=h_{t, D}(B^2)$ is a homotopy between $D_1=D$ and 
$D_0=(df_D)_o(B^2)$.
If $D$ is a round domain with radius $r_D$, 
then $f_D(x)=r_Dx+c(D)$; so  $h_{t, D}=f_D$ which shows $D_t=D$, or $D_t$ fixes elements of $\mathcal{RD}(\R^2)$ as desired.

Now we check that $g(D_t)=(g(D))_t$ for all $g\in G$, i.e., $D_t$ is $G$-equivariant.
By continuity it is enough to consider $t\in (0,1]$.
If $g$ is linear, then by \eqref{eq:gfD},
\begin{eqnarray*}
g(D_t)&=&g(f_D(o))+\frac{g(f_D(tB^2))-g(f_D(o))}{t}  \\
          &=&f_{g(D)}(o)+\frac{f_{g(D)}(tB^2)-f_{g(D)}(o)}{t} \;\;=\;\; (g(D))_t.
\end{eqnarray*}
Recall that if $g$ is a translation by $b$, then $f_{g(D)}=g\circ f_D=b+f_D$.
Hence 
\begin{eqnarray*}
(g(D))_t&=&f_{g(D)}(o)+\frac{f_{g(D)}(tB^2)-f_{g(D)}(o)}{t}\\
&=&b+f_D(o)+\frac{b+f_{D}(tB^2)-b-f_{D}(o)}{t}\;=\;b+D_t\;=\;g(D_t).
\end{eqnarray*}
We conclude that $g(D_t)=(g(D))_t$ for all $g\in G$, since $G$ is generated by translations and linear conformal 
maps.

Finally note that $D_0=(df_D)_o(B^2)$ is convex, because $(df_D)_o$ is linear.
Let $B_D\in\mathcal{RD}(\R^2)$ be the round domain centered at $c(D)$ and with 
the same area as $D$. Then the mapping
$\lambda\mapsto(1-\lambda) D_0 +\lambda B_D$, where $\lambda\in [0,1]$ and $+$ is the Minkowski sum, 
is a $G$-equivariant homotopy of $D_0$ to $B_D$. Concatenating this homotopy with $D_t$ gives
a $G$-equivariant strong deformation retraction of $\D(\R^2)$ onto $\mathcal{RD}(\R^2)$ as desired. 
Furthermore, $(\lambda, B_D)\mapsto(1-\lambda) B_D +\lambda B^2$ is an $O(2)$-equivariant
deformation retraction of $\mathcal{RD}(\R^2)$ to $B^2$. Hence
$\mathcal{D}(\R^2)$ is $O(2)$-equivariantly contractible. 
\end{proof}

The last observation implies that the space of Jordan curves  in $\R^2$, i.e., continuous injective maps $\S^1\to\R^2$ modulo homeomorphisms of $\S^1$, is equivariantly contractible to circles, with respect to $\Conf(\R^2)$. For rectifiable curves, this contraction can also be performed via curve shortening flow and rescaling \cite{lauer2013}. Using the above proposition we now prove:

\begin{thm}\label{cor:contract2}
Let $M$ be a complete connected surface of constant curvature, and $G=\Isom(M)$. Then
$\D(M)$ admits a $G$-equivariant strong deformation retraction onto $\mathcal{RD}(M)$. In particular  if $M=\S^2$, then $\D(M)$ admits a $G$-equivariant strong deformation retraction onto the space of hemispheres. Furthermore if $M=\R^2$ or $\mathbf{H}^2$, then $\D(M)$ is equivariantly contractible with respect to $O(2)\subset\Isom(M)$.
\end{thm}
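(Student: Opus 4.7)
The plan is to treat each model plane $\R^2$, $\H^2$, $\S^2$ separately, then reduce the general constant-curvature case via the universal Riemannian cover. The case $M=\R^2$ is immediate from Proposition \ref{cor:contract}: the $\Conf(\R^2)$-equivariant retraction onto $\mathcal{RD}(\R^2)$ is in particular $\Isom(\R^2)$-equivariant, and the $O(2)$-equivariant contractibility is also recorded there.

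For $M=\H^2$, I would adapt the Euclidean argument in normal coordinates at the center. By Theorem \ref{thm:main} there is a $G$-equivariant center $c$, and by Lemma \ref{lem:Riemann} a canonical conformal $f_D\colon B^2\to D$ with $f_D(o)=c(D)$. Since $\exp_{c(D)}$ is a global diffeomorphism on $\H^2$, the pullback $\tilde f_D:=\exp_{c(D)}^{-1}\circ f_D\colon B^2\to T_{c(D)}\H^2\cong\R^2$ satisfies $\tilde f_D(o)=0$. The Euclidean rescaling $\tilde h_{t,D}(x):=\tilde f_D(tx)/t$ for $t\in(0,1]$ extends continuously to $t=0$ with limit $(df_D)_o(x)$; pushing forward by $\exp_{c(D)}$ gives a homotopy $D_t:=\exp_{c(D)}(\tilde h_{t,D}(B^2))$ from $D_1=D$ to the ``exponentiated ellipse'' $D_0=\exp_{c(D)}((df_D)_o(B^2))$ at $c(D)$. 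A second $G$-equivariant homotopy then interpolates $D_0$ to the hyperbolic round disk of matching area at $c(D)$. Equivariance under $G=\Isom(\H^2)$ uses the equivariance of $c$, the naturality of $\exp$ under isometries, and the uniqueness clause of Lemma \ref{lem:Riemann}. The $O(2)$-equivariant contractibility of $\mathcal{RD}(\H^2)$ --- shrink radii while moving centers to a chosen $O(2)$-fixed basepoint --- then yields $O(2)$-equivariant contractibility of $\D(\H^2)$.

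For $M=\S^2$, the same $\exp_{c(D)}$-based strategy applies whenever $-c(D)\notin\overline D$, since there $\exp_{c(D)}$ is a diffeomorphism from the open ball of radius $\pi$ in $T_{c(D)}\S^2$ onto $\S^2\setminus\{-c(D)\}$. For domains whose closure contains $-c(D)$, I would first apply an equivariant preliminary shrinkage using the family $f_D(sB^2)$ for $s$ slightly less than $1$ in order to push the antipode out of the closure, and then proceed as in the $\H^2$ case. Once the retraction of $\D(\S^2)$ onto $\mathcal{RD}(\S^2)$ is established, a second $O(3)$-equivariant homotopy that varies the spherical radius to $\pi/2$ retracts $\mathcal{RD}(\S^2)$ onto the single $O(3)$-orbit of hemispheres, yielding the stated ``in particular'' claim.

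For a general non-simply-connected $M$ of constant curvature, I would write $M=\tilde M/\Gamma$ with $\tilde M$ a model plane, observe that every Jordan domain in $M$ lifts to one in $\tilde M$, and descend the equivariant retraction on $\D(\tilde M)$ just constructed to the required $\Isom(M)$-equivariant retraction on $\D(M)$. The main obstacle will be the $M=\S^2$ case, where the global topology of the sphere prevents a single $\exp$-based chart from covering all Jordan domains; domains whose closure contains $-c(D)$ require a careful preliminary equivariant shrinkage to keep the homotopy well-defined and continuous across all of $\D(\S^2)$.
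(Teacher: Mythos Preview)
Your high-level strategy---equivariant center from Theorem~\ref{thm:main}, Carath\'{e}odory map $f_D$ from Lemma~\ref{lem:Riemann}, and passage to the tangent plane via $\exp_{c(D)}$---matches the paper's. But two genuine gaps remain, and both stem from the same missing idea.

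First, your $\H^2$ homotopy is \emph{not} a strong deformation retraction: it does not fix round disks. If $D$ is a hyperbolic disk of radius $r$ centered at $c(D)$, then $f_D$ is conformal for the hyperbolic metric on $D$, so $\tilde f_D=\exp_{c(D)}^{-1}\circ f_D$ is \emph{not} a Euclidean similarity. Concretely (in the Poincar\'{e} model with $c(D)=o$ and $D$ of Euclidean radius $\rho$) one has $\tilde f_D(z)=2\tanh^{-1}(\rho|z|)\,z/|z|$, so $\tilde h_{t,D}(B^2)=\big(2\tanh^{-1}(\rho t)/t\big)B^2$, whose radius varies with $t$. Thus $D_t$ is a round disk of changing radius, and $D$ is moved during the homotopy. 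The trick from Proposition~\ref{cor:contract} works in $\R^2$ precisely because $f_D$ is affine when $D$ is a Euclidean round disk; that fails after composing with $\exp^{-1}$.

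Second, your $\S^2$ repair is not well defined. If $-c(D)\in\inte(D)$ then $f_D^{-1}(-c(D))$ can sit at any distance $r_0<1$ from $o$, so shrinking to $f_D(sB^2)$ for $s$ ``slightly less than $1$'' need not expel $-c(D)$; you would need $s<r_0$, which can be arbitrarily small. Moreover you give no continuous, equivariant choice of $s(D)$ that patches the two regimes $-c(D)\in\overline D$ and $-c(D)\notin\overline D$.

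The paper fixes both issues in one stroke: before using $\exp_{c(D)}^{-1}$, it conformally shrinks $D$ via $t\mapsto f_D(tB^2)$ until the image first fits inside the largest round disk $R_D\subset D$ centered at $c(D)$ (call that time $T$). Since $R_D$ is a geodesic ball of radius $<\pi$, the shrunk domain $D_T$ lies in the injectivity domain of $\exp_{c(D)}$, eliminating the antipodal problem on $\S^2$ uniformly. And if $D$ is already round then $R_D=D$ and $T=1$, so this first stage is the identity; the subsequent Euclidean retraction (from Proposition~\ref{cor:contract}, applied to $\exp_{c(D)}^{-1}(D_T)$ with a rescaling that keeps it inscribed in $\exp_{c(D)}^{-1}(R_D)$) then also fixes it, because $\exp_{c(D)}^{-1}$ takes a round geodesic disk centered at $c(D)$ to a round Euclidean disk centered at the origin. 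Inserting this ``shrink into $R_D$ first'' step into your outline would close both gaps.
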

\begin{proof}
After replacing $M$ by its universal Riemannian cover, and $G$ by isometries of the covering space, we may assume that $M$ is a model plane.
The case of $\R^2$ is already covered by  Proposition \ref{cor:contract}, since $\Isom(M)\subset\Conf(M)$. We reduce the cases of $\S^2$ and $\H^2$ to that of $\R^2$ via the exponential map, as follows.

By Theorem \ref{thm:main} there exists a $G$-equivariant center $c$ on $\D(M)$.  For $D\in\D(M)$ let $f_D:=f_{D,c(D), u_0}$ be as in Lemma \ref{lem:Riemann}, where $u_0\in T_{c(D)} M$ is any unit vector. Note that, for $t\in[0,1]$, $f_D(t B^2)$ does not depend on $u_0$. Indeed if $u_0'\in T_{c(D)}M$ is another unit vector, and $f_D':=f_{D,c(D), u_0'}$,  then $f_D'=f_D\circ\rho$ for some $\rho\in O(2)$, since $f_D^{-1}\circ f_D'$ is a conformal transformation of $B^2$ which fixes $o$. Let $R_D\in\mathcal{RD}(M)$ be the largest round domain centered at $c(D)$ which is contained in $D$, and $T$ be the infimum of $t\in [0,1]$ such that $f_D(t B^2)$ intersects $\partial R_D$. Then  $[T,1]\ni t\mapsto D_t:=f_D(tB^2)$ deforms $D=D_1$ to a domain $D_T$ in $R_D$ which intersects $\partial R_D$. 

Using the exponential map $\exp_{c(D)}\colon T_{c(D)}M\to M$ we identify $D_T$ with a domain $\ol D:=\exp_{c(D)}^{-1}(D_T)$ in $T_{c(D)} M\simeq\R^2$, and $R_D$ with the round domain $\overline{R}:=\exp_{c(D)}^{-1}(R_D)$ centered at the origin $o\simeq c(D)$ of $\R^2$. By Proposition \ref{cor:contract}, there exists a strong $G$-equivariant deformation retraction of $\D(\R^2)$ onto $\mathcal{RD}(\R^2)$. Restricting this retraction to $\ol D$ yields a deformation $\ol D_t\in\D(\R^2)$, $t\in[0,1]$, with $\ol D_0=\ol D$ and $\ol D_1$ a round domain centered at $o$. Let  $\lambda(t)\in\R^+$ be the dilation factor such that $\lambda(t)\ol D_t$ lies  inside $\ol R$ and intersects $\partial\ol R$. Then $\lambda(t) \ol D_t$ is a deformation of $\ol D$ to $\ol R$ through domains which are inside $\ol R$ and intersect $\partial\ol R$. So $\exp_{c(D)}(\ol D_t)$ is a deformation of $D_T$ to $R_D$ through domains which lie in $R_D$ and intersect $\partial R_D$. Concatenating this deformation with the earlier deformation of $D$ to $D_T$ yields a deformation of $D$ to $R_D$ which depends continuously on $D$, and keeps $D$ fixed whenever $D$ is round. Hence we obtain a strong deformation retraction $\D(M)\to\mathcal{RD}(M)$. 

To see that the retraction we have constructed is $G$-equivariant, note that $R_{\rho(D)}=\rho(R_{D})$ for all $\rho\in G$, which yields that $(\rho(D))_t=\rho(D_t)$. In particular $(\rho(D))_T=\rho(D_T)$, which yields that $\overline{\rho(D)}\subset T_{\rho(c(D))}M$ and $\ol D\subset T_{c(D)}M$ are  isometric, since $d\rho_{c(D)}(\overline{D})=\overline{\rho(D)}$.
It follows that the deformations $(\overline{\rho(D)})_t$ and $\ol D_t$ also correspond via the isometry $d\rho_{c(D_t)}\colon T_{c(D_t)}M\simeq\R^2\to T_{\rho(c(D_t))}M\simeq\R^2$, since $(\overline{\rho(D)})_t$ and $\ol D_t$ are constructed using the retraction given by Proposition \ref{cor:contract}, which is equivariant under $\Isom(\R^2)$. Thus we obtain a deformation of $\rho(D)$ to $R_{\rho(D)}$ which corresponds via $\rho$ to the deformation of $D$ to $R_D$ at each instant.

If $M=\H^2$, concatenating the homotopy which deforms $D$ to $R_D$ with translations along geodesic segments which connect $c(D)$ to a fixed point $o\in M$, followed by a dilation, yields a contraction of $\D(M)$ to the round domain $D_0$ of radius $1$ centered at $o$. This yields a contraction of $\D(\H^2)$ to $D_0$ which is equivariant under $O(2)\subset\Isom(\H^2)$. If $M=\S^2$, we may dilate  each domain $R_D$ with respect to $c(D)$ until radius of $R_D$ reaches $\pi/2$, which constitutes the desired retraction of $\D(\S^2)$ onto hemispheres, and competes the proof.
\end{proof}

\section{Canonical  Centers}\label{sec:canonical}

The centers that we constructed in Section \ref{sec:equiv} involved arbitrary choices for the extensions of prescribed values across each stratum of $\D(M)$. Here we describe a procedure for constructing centers on smooth Jordan domains in $\R^2$, which correspond in a canonical way to the center of mass, Steiner point, or circumcenter (the center of the circumscribing circle) of convex domains. In this section we assume that $G=\Conf(\R^2)$.

\begin{lem}\label{lem:projection}
For any $D\in \D(\R^2)$, there exists a strong deformation retraction $ r_D\colon \R^2\times[0,1]\to\R^2$  of $\R^2$ onto $D$ such that the mapping
$$
\D(\R^2)\;\;\ni\;\; D\overset{r}{\longmapsto}  r_D\;\;\in\;\;\C^0\big(\R^2\times[0,1],\R^2\big)
$$
is continuous and $G$-equivariant, i.e., $\rho(r_D(x,t))= r_{\rho(D)}(\rho(x),t)$ for all $\rho\in G$ and $D\in\D(\R^2)$.
\end{lem}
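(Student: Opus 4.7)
The plan is to construct $r_D$ using the exterior conformal uniformization of the complement of $D$, in the spirit of Lemma~\ref{lem:Riemann}. For each $D\in\D(\R^2)$, viewing $\hat{\mathbf{C}}\setminus D$ as a simply connected domain in $\hat{\mathbf{C}}$ containing $\infty$, the Riemann mapping theorem yields a unique conformal map $F_D\colon\{z\in\mathbf{C}:|z|>1\}\to\mathbf{C}\setminus D$ with $F_D(\infty)=\infty$ and positive real leading coefficient at infinity (capacity normalization); Carath\'{e}odory's theorem promotes $F_D$ to a homeomorphism of $\{|z|\geq 1\}$ onto $\mathbf{C}\setminus\inte(D)$ sending $\partial B^2$ onto $\partial D$. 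Granting the continuity of $D\mapsto F_D$ (the main obstacle below), I would introduce the radial homotopy
$$
\psi_t(z):=\bigl(1+(1-t)(|z|-1)\bigr)\,\frac{z}{|z|},\qquad t\in[0,1],
$$
which is a strong deformation retraction of $\{|z|\geq 1\}$ onto $\partial B^2$ that commutes with rotations and with complex conjugation, and then set
$$
r_D(x,t):=\begin{cases} x, & x\in D,\\ F_D\bigl(\psi_t(F_D^{-1}(x))\bigr), & x\in\R^2\setminus\inte(D).\end{cases}
$$
The two pieces agree on $\partial D$ because $\psi_t$ fixes $\partial B^2$; hence $r_D$ is continuous, $r_D(\cdot,0)=\mathrm{id}$, $r_D(\R^2,1)\subset\partial D\subset D$, and $r_D(\cdot,t)|_D=\mathrm{id}_D$.

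For $G$-equivariance, the key point is that $F_D$ transforms under $G=\Conf(\R^2)$ up to an internal rotation of $B^2$. Writing an orientation-preserving $\rho\in G$ as $\rho(z)=\alpha z+b$ with $\alpha\in\mathbf{C}^*$ and $\eta:=\alpha/|\alpha|$, the composition $\rho\circ F_D$ is a conformal map $\{|z|>1\}\to\mathbf{C}\setminus\rho(D)$ fixing $\infty$, with leading coefficient $\alpha F_D'(\infty)$; pre-composing with $z\mapsto\eta z$ restores a positive real leading coefficient, so uniqueness forces $\rho\circ F_D(z)=F_{\rho(D)}(\eta z)$, and hence $F_{\rho(D)}^{-1}\circ\rho=\eta\cdot F_D^{-1}$. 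Since $\psi_t$ commutes with the rotation $\eta$, a direct substitution gives
$$
r_{\rho(D)}(\rho(x),t)=F_{\rho(D)}\bigl(\psi_t(\eta F_D^{-1}(x))\bigr)=F_{\rho(D)}\bigl(\eta\,\psi_t(F_D^{-1}(x))\bigr)=\rho(r_D(x,t)).
$$
The orientation-reversing case $\rho(z)=\alpha\bar z+b$ runs identically, using that $\psi_t$ also commutes with conjugation; since $G$ is generated by these two types of similarities, equivariance holds in general.

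The main obstacle is the continuity of the exterior uniformization $D\mapsto F_D$ (on compacta up to the boundary), which is the exterior analogue of Lemma~\ref{lem:Riemann} and is not quite formally contained in the form stated there. I would reduce it to the interior case by inverting through an interior point: using the $G$-equivariant center $c(D)$ produced by Theorem~\ref{cor:conformal}, the inversion $\iota_D(z):=1/(z-c(D))$ sends $\hat{\mathbf{C}}\setminus D$ to a bounded Jordan domain containing $0$, to which Carath\'{e}odory's kernel theorem and boundary-extension theorem apply directly as in Lemma~\ref{lem:Riemann}. Composing with $z\mapsto 1/z$ and $\iota_D^{-1}$ and recording the effect on the Laurent expansion at $\infty$ yields a continuous family $F_D$ with the required normalization. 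Once this is in hand, the continuity of $D\mapsto r_D$ reduces to piecewise checks near $\partial D$ that follow from the continuous boundary extension of $F_D^{\pm 1}$, and the proof is complete.
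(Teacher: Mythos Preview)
Your argument is correct and follows essentially the same strategy as the paper: conformally uniformize the complement of $D$, pull back a radial retraction, and push forward. The implementation differs in two places worth noting.

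First, the paper does not check full $\Conf(\R^2)$-equivariance directly. Instead it pre-normalizes via the circumscribing circle, sending each $D$ to $\rho_D(D)\in\D_0(\R^2)$ (domains inscribed in $\S^1$), so that only $O(2)$-equivariance needs to be verified on $\D_0(\R^2)$; the general case then follows by the conjugation formula $r_D=\rho_D^{-1}\circ r_{\rho_D(D)}\circ(\rho_D\times\mathrm{id})$, exactly as in the proof of Theorem~\ref{cor:conformal}. Your direct computation with the capacity normalization and the Laurent coefficient at $\infty$ is a clean alternative that avoids this reduction.

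Second, and more substantively, for the continuity of the exterior uniformization the paper compactifies via the \emph{fixed} stereographic projection $st\colon\S^2\setminus\{p_0\}\to\R^2$ rather than inverting through a $D$-dependent interior point. Once $D\in\D_0(\R^2)$, the north pole $p_0$ lies in the interior of $\tilde{D^*}:=\overline{st^{-1}(\R^2\setminus D)}\subset\S^2$, so Lemma~\ref{lem:Riemann} applies directly to $\tilde{D^*}$ with base point $p_0$ and yields continuity of $D\mapsto f_{\tilde{D^*}}$ with no further work. This sidesteps your appeal to Theorem~\ref{cor:conformal} for the inversion center, making Lemma~\ref{lem:projection} logically independent of the equivariant-center machinery of Sections~\ref{sec:general}--\ref{sec:equiv}. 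Since Section~\ref{sec:canonical} is meant to give a \emph{canonical} construction, this self-containment is a feature; in your version you could achieve the same by using any continuous (not necessarily equivariant) interior selection from Proposition~\ref{prop:center}, since equivariance of $F_D$ is already forced by the capacity normalization.
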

\begin{proof}
As in the proof of Theorem \ref{cor:conformal}, for $D\in\D(\R^2)$, let $x_0(D)$ be the circumcenter of $D$, $R(D)$ be the radius of the circumscribing circle, and set $ \rho_D(x):=(x-x_0(D))/R(D)$.
Then the mapping $D\mapsto \rho_D(D)$ is a retraction from $\D(\R^2)$ onto the space $\D_0(\R^2)$ of domains with $x_0(D)=o$ and $R(D)=1$. By a discussion similar to the proof of Theorem \ref{cor:conformal}, it is enough to construct the retractions $r_D$ that we seek for  $D\in \D_0(\R^2)$, and show only that they are $O(2)$-equivariant. Then 
$
r_D(x,t):=\rho_D^{-1}\circ r_{\rho(D)}\big(\rho_D(x),t\big)
$
yields the desired retraction for all $D\in\D(\R^2)$. 

So assume that $D\in \D_0(\R^2)$. Let $D^*$ be the closure of $\R^2\setminus D$, $p_0:=(0,0,1)$, $st\colon\S^2\setminus\{p_0\}\to\R^2$ be the stereographic projection, and $\tilde{D^*}\subset\S^2$ be the closure of $st^{-1}(D^*)$. Furthermore, fix $u_0\in T_{p_0}\S^2\setminus\{0\}$, and  let $f_{\tilde{D^*}}:=f_{\tilde{D^*},p_0,u_0}\colon B^2\to\tilde{D^*}$ be as in Lemma \ref{lem:Riemann}. Then we obtain a conformal map $g_D\colon B^2\setminus\{o\}\to D^*$ given by
$
g_D:=st\circ f_{\tilde{D^*}}.
$
Next let $h_t\colon B^2\setminus\{o\}\to B^2$, given by $h_t(x):=(1-t)x +t x/|x|$ be the strong deformation retraction of $B^2\setminus\{o\}$ onto $\S^1$, and set
$$
 r_D(x,t):=g_D\circ h_t\circ g^{-1}_D (x).
$$

By Lemma \ref{lem:Riemann}, $D\mapsto  r_D$ is continuous. 
It remains to check that $ r_D$ is equivariant with respect to $\rho\in O(2)$. So we compute that
\begin{eqnarray*}
\rho(r_D(x,t))&=&\rho\circ g_D\circ h_t\circ g^{-1}_D (x)\\
&=& 
g_{\rho(D)}\circ \big(g_{\rho(D)}^{-1}\circ \rho\circ g_D\big)\circ h_t\circ \big(g_{\rho(D)}^{-1}\circ \rho\circ g_D\big)^{-1}\circ g_{\rho(D)}^{-1}\circ\rho(x)\\
&=& 
g_{\rho(D)}\circ  h_t\circ g_{\rho(D)}^{-1}\circ\rho(x)\;\;=\;\;r_{\rho(D)}(\rho(x),t).
\end{eqnarray*}
The third equality above holds because $g_{\rho(D)}^{-1}\circ \rho\circ g_D$ is a conformal transformation of $B^2\setminus\{o\}$, so it is a rotation  about $o$, which commutes with $h_t$.
\end{proof}

An analogue of the above result for curves had been established earlier by Pixley \cite{pixley1976}; see also \cite{repovs-semonov2010}. Now any continuous map $f\colon \D(\R^2)\to\R^2$ generates a continuous point selection on $\D(\R^2)$ given by $r_D(f(D),1)$, which is $G$-equivariant whenever $f$ is $G$-equivariant. To obtain a center we need to push this point into the interior of $D$ in a continuous way. 
We show how to do this on the space of $\C^2$ Jordan domains $\D_{\C^2}(\R^2)$, i.e., images of $\C^2$ embeddings $B^2\to \R^2$ equipped with $\C^2$-topology. So a pair of domains in $\D_{\C^2}(\R^2)$ are close provided that they admit parametrizations that are $\C^2$-close.
 For each $D\in \D_{\C^2}(\R^2)$ let $\textup{reach}(D)$ denote its \emph{reach} in the sense of Federer \cite{federer:curvature,thale2008}. More explicitly,   $\textup{reach}(D)$ is the supremum of numbers $r$ such that  through every point of $\partial D$ there passes a circle of radius $r$ which is contained in $D$. Reach may also be defined as the distance between $\partial D$ and the \emph{medial axis} of $D$, i.e., the set of singularities of the distance function from the boundary $d_{\partial D}\colon D\to\R$, see \cite[Sec. 3]{ghomi-spruck2022}. 
Since $\partial D$ is $\C^2$, it follows that $d_{\partial D}$ is $\C^2$ within the open neighborhood of $\partial D$ of radius $\textup{reach}(D)$ \cite{foote1984}, and hence $\textup{reach}(D)>0$. Furthermore, Chazal and Soufflet \cite[Thm. 3.2]{chazal-soufflet2004} showed that the medial axis of $D$ varies continuously, with respect to Hausdorff distance, under $\C^2$ perturbations of $\partial D$. So we may record that:

\begin{lem}\label{lem:reach}
For any domain $D\in \D_{\C^2}(\R^2)$, $\textup{reach}(D)>0$. Furthermore, the mapping $D\mapsto \textup{reach}(D)$ is continuous on $ \D_{\C^2}(\R^2)$.
\end{lem}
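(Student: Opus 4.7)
The plan is to handle the two assertions separately, using the two references already signposted in the excerpt. For positivity of $\textup{reach}(D)$, I would appeal to Foote's theorem \cite{foote1984}: a compact $\C^2$ hypersurface $N\subset\R^n$ admits an open tubular neighborhood of positive width on which the nearest-point projection to $N$ is single-valued and $\C^1$, equivalently on which the distance function $d_N$ is $\C^2$. Applying this with $N=\partial D$ produces a neighborhood of $\partial D$ disjoint from the medial axis of $D$, and the width of that neighborhood is a positive lower bound for $\textup{reach}(D)$. By the definition recalled in the excerpt, through every point of $\partial D$ one then finds an inscribed tangent disk of that radius, so $\textup{reach}(D)>0$.

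For continuity, I would rewrite $\textup{reach}$ as a set-to-set distance. Let $\textup{med}(D)$ denote the medial axis of $D$; it is contained in $D$, so its closure $\overline{\textup{med}(D)}$ is compact. Using the characterization of reach recalled in the excerpt,
$$
\textup{reach}(D)\;=\;\inf_{x\in\partial D}\,\dist\bigl(x,\overline{\textup{med}(D)}\bigr)\;=\;d\bigl(\partial D,\overline{\textup{med}(D)}\bigr),
$$
where $d(\cdot,\cdot)$ is the gap distance between compact sets. This gap distance is continuous with respect to the Hausdorff metric on pairs of non-empty compact subsets of $\R^2$, so continuity of $\textup{reach}$ reduces to two Hausdorff-continuity statements. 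The map $D\mapsto\partial D$ is continuous because $\C^2$-convergence of parametrizations forces $\C^0$-convergence, and hence Hausdorff convergence of the images. The map $D\mapsto\overline{\textup{med}(D)}$ is continuous by the theorem of Chazal and Soufflet \cite[Thm.~3.2]{chazal-soufflet2004} quoted just before the lemma statement.

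The main obstacle I anticipate is not a deep argument but a bookkeeping check that the two cited black boxes apply in precisely the form needed. Foote's statement must be parsed to confirm that the radius on which $d_{\partial D}$ is $\C^2$ is bounded below by something positive---which follows from compactness of $\partial D$---and that this regular tubular radius indeed coincides with (or is a lower bound for) the reach in Federer's sense. For Chazal--Soufflet one must verify that their notion of $\C^2$ perturbation of a compact $\C^2$ hypersurface matches convergence in $\D_{\C^2}(\R^2)$ as defined in the excerpt, and that their continuity of the medial axis survives passing to closure. With these verifications in hand, both halves of the lemma follow directly from the two cited results and the elementary fact that the gap distance is Hausdorff-continuous.
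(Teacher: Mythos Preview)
Your proposal is correct and follows the same route as the paper: the paper does not give a separate proof of this lemma but, in the paragraph immediately preceding it, derives positivity from Foote \cite{foote1984} and continuity from the Hausdorff-continuity of the medial axis under $\C^2$ perturbations due to Chazal--Soufflet \cite[Thm.~3.2]{chazal-soufflet2004}. Your write-up makes explicit the intermediate step the paper leaves implicit---that $\textup{reach}(D)$ is the gap distance between the compact sets $\partial D$ and $\overline{\textup{med}(D)}$, and that this gap is jointly Hausdorff-continuous---which is exactly what is needed to pass from the cited black boxes to the conclusion.
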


Let $D^{\frac12}\subset D$ be the domain obtained by moving $\partial D$  along its inward normals by the distance $\textup{reach}(D)/2$. So $D^{\frac12}$ is characterized by
\begin{equation}\label{eq:Dlambda}
D=D^{\frac12} + \frac{\textup{reach}(D)}{2} B^2,
\end{equation}
where $+$ is the Minkowski sum. Alternatively, $D^\frac12$ may be defined as the set $\{d_{\partial D}\geq \textup{reach}(D)/2\}$. Note that $d_{\partial D}$ is a convex function if $D$ is convex \cite[Lem.  3.3, p. 211]{sakai1996}. Thus $D^\frac12$ is convex whenever $D$ is convex. Since $d_{\partial D}$ is $\C^2$ within the distance $\textup{reach}(D)$  of $\partial D$, we have
$D^\frac12\in \D_{\C^2}(\R^2)$. By Lemma \ref{lem:reach}, $D\mapsto D^{\frac12}$ is continuous, and $D^\frac12\subset\inte(D)$. Furthermore, since conformal transformations are affine, and affine transformations preserve Minkowski addition, \eqref{eq:Dlambda} shows that $D\mapsto D^{\frac12}$ is $G$-equivariant. 
Thus any continuous map $f\colon  \D_{\C^2}(\R^2)\to\R^2$ generates a center on $\D_{\C^2}(\R^2)$ given by
\begin{equation}\label{eq:cf}
c_f(D):=r_{\!\!D^\frac12}\big(f(D),1\big),
\end{equation}
 where $r$ is as in Lemma \ref{lem:projection}. Note that $c_f$ is $G$-equivariant whenever $f$ is $G$-equivariant, and $c_f(D)=f(D)$ whenever $f(D)\in D^\frac12$. The next lemma gives examples of $f$ with this property. For $D\in\D_{\C^2}(\R^2)$, 
the \emph{Steiner point} of $D$ is the center of mass of $\partial D$ with respect to the density function given by its curvature. 

\begin{lem}
If $f\colon \D_{\C^2}(\R^2)\to\R^2$ is the Steiner point, circumcenter, or center of mass, then $f(D)\in D^\frac12$ whenever $D$ is convex.
\end{lem}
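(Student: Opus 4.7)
The plan is to exploit the Minkowski decomposition $D=D^{\frac12}+(R/2)B^2$ from \eqref{eq:Dlambda}, where $R:=\textup{reach}(D)$, and treat each of the three centers via a structural property that respects this decomposition. Throughout, $D^{\frac12}$ is itself convex, as already noted in the text.

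For the Steiner point I would first reconcile the two standard definitions: the substitution $d\theta=\kappa\,d\sigma$ together with an integration by parts converts the curvature-weighted boundary centroid into the support-function formula $s(K)=\pi^{-1}\int_{S^1}h_K(u)\,u\,du$, from which Minkowski additivity $s(A+B)=s(A)+s(B)$ is immediate via $h_{A+B}=h_A+h_B$. Applying this to $D=D^{\frac12}+(R/2)B^2$ and noting that the Steiner point of an origin-centered disk vanishes by rotational symmetry gives $s(D)=s(D^{\frac12})$, and the right-hand side lies in $D^{\frac12}$ by the classical fact that the Steiner point of a convex body lies in the body.

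For the circumcenter the key observation is that $K+rB^2\subset B(p,\rho+r)$ if and only if $K\subset B(p,\rho)$, so the smallest enclosing disk of $K+rB^2$ is concentric with that of $K$, with radius larger by $r$. Applied with $K=D^{\frac12}$ and $r=R/2$, this identifies the circumcenter of $D$ with the circumcenter of $D^{\frac12}$; the latter lies in $D^{\frac12}$ since the center of the minimum enclosing disk of any compact set is a convex combination of its contact points with the enclosing circle, and for convex $D^{\frac12}$ all such contact points lie in $D^{\frac12}$.

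The center of mass is the main obstacle, because the centroid is not Minkowski-additive, so I must use a genuine geometric inequality. I would invoke the classical Minkowski centroid inequality: for a convex body $K\subset\R^n$ with centroid $g$ and any supporting hyperplane $H$ with outer normal $u$,
\[
\dist(g,H)\geq \frac{w(K,u)}{n+1},
\]
where $w(K,u)$ denotes the width of $K$ in the direction $u$. By the very definition of reach, $D$ contains an inscribed disk of radius $R$, so every width of $D$ is at least $2R$. For $n=2$ this yields $\dist(g,H)\geq 2R/3>R/2$ for every supporting line $H$, and since $\dist(g,\partial D)=\min_H\dist(g,H)$ for convex $D$, the centroid $g$ lies in $D^{\frac12}$ with room to spare.
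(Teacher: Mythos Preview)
Your treatments of the Steiner point and the circumcenter coincide with the paper's: both rest on the identity $f(D)=f(D^{\frac12})$ coming from the Minkowski decomposition $D=D^{\frac12}+(R/2)B^2$, and you supply the same (or slightly more explicit) justifications.

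For the center of mass you take a genuinely different route. The paper splits $D$ into $D^{\frac12}$ and the annulus $A:=D\setminus D^{\frac12}$, writes the centroid of $D$ as the convex combination $\frac{|D^{\frac12}|}{|D|}f(D^{\frac12})+\frac{|A|}{|D|}f(A)$, and then shows by an explicit computation in normal coordinates off $\partial D^{\frac12}$ that $f(A)$ is a positive-weighted average of points of $\partial D^{\frac12}$, hence lies in the convex set $D^{\frac12}$. Your argument instead invokes the classical Minkowski centroid inequality $\dist(g,H)\ge w(K,u)/(n+1)$ together with the observation that $\textup{reach}(D)=R$ forces every width of $D$ to be at least $2R$, giving $\dist(g,\partial D)\ge 2R/3>R/2$. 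This is correct and shorter; its cost is the reliance on an external classical result, whereas the paper's computation is self-contained and yields the finer information that the centroid of the annulus $A$ is itself a curvature-weighted boundary centroid of $\partial D^{\frac12}$.
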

\begin{proof}
First assume that $f$ is the 
Steiner point. Then $f$ is additive with respect to Minkowski sum for convex domains \cite{schneider1971, schneider2014}. Hence when $D$ is convex, 
$$
f(D)=f\Big(D^{\frac12} + \frac{R}{2} B^2\Big)=f(D^{\frac12} )+\frac{R}{2} f(B^2)=f(D^{\frac12})\in D^\frac12,
$$
where $R:=\textup{reach}(D)$.
Next suppose that $f$ is the circumcenter. Note that the circumscribing circle of $D$ is concentric to that of $D^\frac12$. So again we have
$
f(D)=f(D^\frac12),
$
 which yields $f(D)\in D^\frac12$ when $D$ is convex. Finally let $f$ be the center of mass. Set $A:=D\setminus D^\frac12$. Then
 $$
 f(D)=\frac{|D^\frac12|}{|D|}f(D^\frac12)+\frac{|A|}{|D|}f(A),
 $$
 where $|\cdot|$ denotes area. Thus it suffices to show that $f(A)\in D^\frac12$, because $D^\frac12$ is convex. Let $\gamma\colon[0,L]\to\R^2$ be an arc-length parametrization for $\partial D^\frac12$, $N(t)$ be the inward unit normal vector along $\gamma$, and set
 $
 x(t,s):=\gamma(t)-sN(t).
 $
 Then 
 $$
 f(A)=\frac{1}{|A|}\int_0^\frac{R}{2}\int_0^L x(t,s)\left|\frac{\partial x}{\partial t}\times \frac{\partial x}{\partial s}\right| dt\,ds.
$$
Set $T(t):=\gamma'(t)$. Then $N'(t)=-\kappa(t)T(t)$, where $\kappa:=|T'|$ is the curvature of $\gamma$. A simple computation shows that
$
\left|\frac{\partial x}{\partial t}\times \frac{\partial x}{\partial s}\right|=1+s\kappa(t).
$
Next note that $T'=\kappa N$ and $\int_0^L T'(t)\,dt=T(L)-T(0)=0$. Furthermore, if $\gamma$ parametrizes $\partial D$ in the counterclockwise direction, then $N(t)=i T(t)$, where $i$ indicates counterclockwise rotation by $\pi/2$. So
$\int_0^L N(t)\,dt=i\int_0^L T(t)\,dt=i(\gamma(L)-\gamma(0))=0$. Now we have
\begin{gather*}
\int_0^\frac{R}{2}\int_0^L x(t,s)\left|\frac{\partial x}{\partial t}\times \frac{\partial x}{\partial s}\right| dt \,ds\\
=\int_0^\frac{R}{2}\int_0^L\big(\gamma(t)+s\gamma(t)\kappa(t) -sN(t)-s^2T'(t)\big)dt \,ds\\
=\int_0^\frac{R}{2}\int_0^L \gamma(t)\big(1+s\kappa(t)\big)dt\,ds.
\end{gather*}
So we may write
$$
f(A)=\int_0^L\gamma(t)\delta(t)dt, \quad\quad\text{where}\quad\quad \delta(t):=\frac{1}{|A|}\int_0^\frac{R}{2}\big(1+s\kappa(t)\big)ds.
$$
But $\int_0^L \delta(t)dt=|A|/|A|=1$. Hence $f(A)$ is the center of mass of $\partial D^\frac12$ with respect to the density function $\delta$, which yields that $f(A)\in D^\frac12$ by \cite[Lem. 2.3]{ghomi:knots}.
\end{proof}
Now we may conclude that:

\begin{thm}\label{thm:C2}
Any continuous $G$-equivariant map $f\colon \D_{\C^2}(\R^2)\to\R^2$ generates a $G$-equivariant center $c_f$ on $\D_{\C^2}(\R^2)$ given by \eqref{eq:cf}. Furthermore, if $f$ is the Steiner point, center of mass, or circumcenter, then $c_f(D)=f(D)$ whenever $D$ is convex.
\end{thm}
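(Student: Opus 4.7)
The plan is to verify continuity, interiority, and $G$-equivariance of $c_f$ in turn, and then dispatch the convex case by invoking the unnamed lemma immediately preceding the theorem.

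For the three basic properties I would assemble three pieces already in place. First, $D\mapsto D^{\frac12}$ is continuous on $\D_{\C^2}(\R^2)$ by Lemma \ref{lem:reach} (reach is positive and continuous, and the inward offset varies continuously with it), and it is $G$-equivariant by the discussion following \eqref{eq:Dlambda}: a conformal similarity $\rho(x)=rAx+b$ scales $\textup{reach}(D)$ by the factor $r$ and distributes over the Minkowski sum in \eqref{eq:Dlambda}, so $\rho(D^{\frac12})=\rho(D)^{\frac12}$. Second, $D\mapsto r_D$ is continuous and $G$-equivariant by Lemma \ref{lem:projection}. Third, $f$ is continuous and $G$-equivariant by hypothesis. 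Composing at $t=1$, continuity of $c_f$ is automatic, and the equivariance unfolds as $c_f(\rho(D))=r_{\rho(D^{\frac12})}(\rho(f(D)),1)=\rho(r_{D^{\frac12}}(f(D),1))=\rho(c_f(D))$. For interiority, $r_{D^{\frac12}}(\,\cdot\,,1)$ retracts $\R^2$ onto $D^{\frac12}$, and $D^{\frac12}\subset\inte(D)$ follows from \eqref{eq:Dlambda} since $\textup{reach}(D)>0$.

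For the second assertion, the preceding lemma gives $f(D)\in D^{\frac12}$ whenever $f$ is the Steiner point, circumcenter, or center of mass and $D$ is convex. Since the strong deformation retraction $r_{D^{\frac12}}$ fixes $D^{\frac12}$ pointwise, $c_f(D)=r_{D^{\frac12}}(f(D),1)=f(D)$, as required.

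I do not anticipate a genuine obstacle: the theorem is really a synthesis of Lemmas \ref{lem:projection} and \ref{lem:reach} together with the preceding lemma, and the construction $D\mapsto D^{\frac12}$ is engineered precisely so that these ingredients fit together equivariantly. The only subtle point worth a moment's attention is the compatibility of reach with conformal similarity, which underwrites the equivariance of $D\mapsto D^{\frac12}$ used above.
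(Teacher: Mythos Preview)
Your argument is correct and mirrors the paper's approach exactly: the paper assembles precisely the same ingredients---continuity and $G$-equivariance of $D\mapsto D^{\frac12}$ (from Lemma~\ref{lem:reach} and \eqref{eq:Dlambda}), Lemma~\ref{lem:projection} for $r_D$, and the unnamed lemma for $f(D)\in D^{\frac12}$ in the convex case---in the discussion preceding the theorem, and then states the theorem without further proof. Your write-up simply makes explicit the composition that the paper leaves implicit.
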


The obvious question which remains is how to extend the last result to construct centers for $\D(\R^2)$, and more generally for $\D(M)$, which correspond to classical notions of center in a canonical way.

\begin{note}
Let $\D_{\C^\omega}(\R^2)\subset \D_{\C^2}(\R^2)$ denote the space of planar Jordan domains with analytic boundary. Another approach to construct a center on $\D_{\C^\omega}(\R^2)$ is as follows. The medial axis of a domain $D\in \D_{\C^\omega}(\R^2)$ forms a piecewise analytic tree $T(D)$  \cite{choi-choi-moon1997}. In particular $T(D)$ is rectifiable, and we may define $c(D)$ as the center of $T(D)$ \cite{bandelt1992}, i.e., the point in $T(D)$ whose maximum distance from leaves of $T(D)$, measured in $T(D)$, is as small as possible. 
By Lemma \ref{lem:reach}, $c(D)\in\inte(D)$. Furthermore, the continuity of $c(D)$ should follow from \cite[Thm. 7.2]{choi-choi-moon1997} or  \cite[Thm. 3.2]{chazal-soufflet2004}. See \cite{abe2009} for a survey of literature on medial axis and its stability. For an introduction to medial axis on manifolds and  more references see \cite[Sec. 3]{ghomi-spruck2022}.
\end{note}

\bibliographystyle{abbrv}
\bibliography{references}

\end{document}